\theoremstyle{plain}
\newtheorem{theorem}{Theorem}[section]
\newtheorem{corollary}[theorem]{Corollary}
\newtheorem{lemma}[theorem]{Lemma}
\newtheorem{proposition}[theorem]{Proposition}
\theoremstyle{remark}
\theoremstyle{definition}
\newtheorem{definition}[theorem]{Definition}
\newtheorem{hypothesis}[theorem]{Hypothesis}
\newtheorem{remark}[theorem]{Remark} 
\newcommand{\EE}{\mathbb{E}}
\newcommand{\R}{\mathbb{R}}
\newcommand{\RR}{\mathbb{R} }
\newcommand{\ot}{[0,t]}
\newcommand{\ott}{[0,T]}
\newcommand{\1}{{\bf 1}}
\newcommand{\be}{\beta}
\newcommand{\bp}{\mathbf{P}}
\newcommand{\cf}{\mathcal F}
\newcommand{\HH}{\mathfrak H}
\newcommand{\al}{\alpha}
\newcommand{\ga}{\gamma}
\newcommand{\si}{\sigma}
\newcommand{\vp}{\varphi}
\newcommand{\lt}{\left }
\newcommand{\rt}{\right}
\newcommand{\lc}{\left[}
\newcommand{\rc}{\right]}
\let\Section=\section
\def\section{\setcounter{equation}{0}\Section}
\def\RR{\mathbb{R} }
\def\EE{\mathbb{E}}
\def\de{{\delta}}
\def\si{{\sigma}}
\def\Ga{{\Gamma}}
\title{ Joint H\"older continuity of parabolic Anderson model
 }
\author[Y. Hu]{Yaozhong Hu}
\thanks{Y. Hu is supported by a startup fund from   University of Alberta at Edmonton.}
\address{Department of Mathematical and Statistical Sciences \\
University of Alberta at Edmonton \\
Edmonton, Alberta, T6G 2G1, Canada}
\email{yaozhong@ualberta.ca}
\author[K. L\^e]{Khoa  L\^e}
\address{Department of Mathematics\\
 South Kensington Campus\\
 Imperial College London\\ 
 London, SW7 2AZ, United Kingdom}
\email{n.le@imperial.ac.uk}
\subjclass[2010]{Primary 60H15; Secondary 35R60, 60G60.}
 \keywords{Gaussian process; stochastic heat  equation; parabolic Anderson model; multiplicative noise.}
\begin{document}
\begin{abstract}  
	We show that the random field solution to the parabolic Anderson equation $(\partial_t-\frac12 \Delta)u=u\diamond \dot{W}$ is jointly H\"older continuous in space and time.
\end{abstract}
\maketitle

\setlength{\parindent}{1.5em}



\section{Introduction}
In this paper we  study the H\"older continuity 
and joint H\"older continuity of the solution to the following multiplicative stochastic heat equation:
\begin{equation}\label{spde}
\frac{\partial u(t,x)}{\partial t}=\frac{1}{2}\Delta u(t,x)+u(t,x)\,  \dot W(t,x)\,,
\end{equation} 
where the unknown $u=\left\{u(t,x)\,, t\ge 0, x\in \RR^d\right\}$ is a random field, $\Delta =\sum_{i=1}^d \frac{\partial^2}{\partial x_i^2}$ is the Laplacian,   and  $\dot W$ is general mean zero Gaussian noise whose covariance is given by
\[
\EE(\dot W(s,x)\dot W(t,y)=\ga_0(t-s)\ga(x-y)\,, \quad \forall \ 
s, t\ge 0\,, x, y\in \RR^d\,. 
\]
Here we assume that $\ga_0$ is a locally integrable function and $\ga(x)=\int_{\RR^d} e^{\iota x\cdot \xi} \mu(\xi)d \xi $ 
for some non-negative spectral density $\mu$ which will be specified in the next section. 
  The multiplication between $u $ and $ \dot W$ is the Wick product, whose precise meaning  is given in the following section by using the Skorohod integral.  
  
There are several papers on the H\"older continuity of the solution of stochastic partial differential equation. When the space dimension is $1$ and when  the noise $\dot W$ is space time white it is well-known that the solution is H\"older continuous of exponent 
$\be/2$ in time variable and $\be$ in space variable $x$ for any $\be<1/2$ (see \cite{chendalang} and \cite{sanzsarra} for some  work and for references). 

When the noise is a general Gaussian noise, there are also some work on 
H\"older continuity  in   \cite{HHLNT},  \cite{HHNT}, and \cite{hunualartsong}.
However, the H\"older continuity obtained in those papers are not sharp: 
when the noise is reduced to the one dimensional space-time white noise, one cannot obtain the known H\"older exponents. This paper contains three main contributions. First, we can allow the Gaussian noise $\dot W$ to be quite general, including the noise which is rough in space studied in recent work 
\cite{HHLNT},   \cite{HHLNT1}, and \cite{HHNT}. Secondly, our results are sharp, they yield optimal H\"older exponents when the noise is reduced to one dimensional space time white  noise.
Third, we obtain joint H\"older continuity in the sense of \cite{hule}.  

After the completion of the current paper, we have learnt that Balan et.al. in \cite{BSS} have obtained results along the same line. Although the assumptions on the spatial covariance of the noise are different, the method of \cite{BSS} also relies on Wiener chaos expansion. However, the current paper proceed further by considering joint H\"older continuity.

\section{Preliminary and notations} 
\def\cT{\mathcal{T}}  
Let us start by introducing our basic notation on Fourier transforms
of functions. The space of   Schwartz functions  on $\RR^d$ is
denoted by $\mathcal{S}=\mathcal{S}(\RR^d)$. Its dual, the space of tempered distributions, is $\mathcal{S}'$.  The Fourier
transform of a function $u \in \mathcal{S}$ or distribution $u\in \mathcal{S}'$  is defined with the normalization
\[ \mathcal{F}u ( \xi)  = \int_{\mathbb{R}^d} e^{- \iota
   \xi \cdot x } u ( x) d x, \]
so that the inverse Fourier transform is given by $\mathcal{F}^{- 1} u ( \xi)
= ( 2 \pi)^{- 1} \mathcal{F}u ( - \xi)$.

Let  $\ga_0(s)\,, 0< s <\infty$ be a real valued positive locally integrable function such that $\ga_0(s-t)$ is a  positive definite  function
of $s, t\in (0, \infty)$.
Let $\mu(\xi)$ be a non-negative function on $\RR^d$.

 Let $ \mathcal{D}((0,\infty)\times \RR^d)$ denote the space  of real-valued infinitely differentiable functions with compact support on $(0, \infty) \times \R^d$  and let  $\left\{W(\varphi), \varphi\in \mathcal{D}((0,\infty)\times \R^d)\right\}$   be a zero-mean Gaussian family defined on a complete probability space
$(\Omega,\cf,\bp)$, whose covariance structure
is given by
\begin{equation}\label{eq:cov1}
\EE\lc W(\vp) \, W(\psi) \rc
=  \int_{\R_{+}^2 \times\R^d}
\cf\varphi(s,\xi) \, \overline{\cf\psi(t,\xi)} \, \ga_0(s-t) dsdt \mu(\xi)d \xi 
\end{equation}
where the Fourier transforms $\cf\varphi,\cf\psi$ are understood as Fourier transforms in spatial variables only.

We  
denote by $\HH $ the  Hilbert space obtained by  completion of $ \mathcal{D}((0,\infty)\times \R)$ with respect to the inner product
\begin{equation}\label{eq: H_0 element H prod}
  \langle\varphi, \psi \rangle_{ \HH}=\int_{\R_{+}^2 \times\R^d}
\cf\varphi(s,\xi) \, \overline{\cf\psi(t,\xi)} \, \ga_0(s-t) dsdt \mu(\xi)d \xi \,.
  \end{equation}

We shall use Malliavin calculus to define the stochastic integral  
(\cite{hubook} and \cite{Nua}).  
If   a
random variables $F$ is  of the smooth and cylindrical form 
\begin{equation*}
F=f(W(\phi_1),\dots,W(\phi_n))\,,
\end{equation*}
with $\phi_i \in \HH$, $f \in C^{\infty}_p (\R^n)$ ($f$ is smooth and $f$ itself and   all
its partial derivatives have polynomial growth) then its    Malliavin  derivative  is the
$\HH$-valued random variable defined by
\begin{equation*}
DF=\sum_{j=1}^n\frac{\partial f}{\partial
x_j}(W(\phi_1),\dots,W(\phi_n))\phi_j\,.
\end{equation*}
The operator $D$ is closable from $L^2(\Omega)$ into $L^2(\Omega;
\HH)$  and we define the Sobolev space $\mathbb{D}^{1,2}$ as
the closure of the space of smooth and cylindrical random variables
under the norm
\[
\|F\|_{1,2}=\sqrt{\EE [F^2]+\EE [\|DF\|^2_{\HH}  ]}\,.
\]
If  $u \in L^2(\Omega;
\HH)$,  we say that $u$ is in the domain 
of the divergence operator $\de$    if there is a   square integrable random variable, denoted by $\de(u)$,   such  that 
\begin{equation}\label{dual}
\EE  \lc \de(u) F \rc =\EE  \lc \langle DF,u
\rangle_{\HH}\rc \quad \hbox{for any $F \in \mathbb{D}^{1,2}$}\,. 
\end{equation}
$\de(u)$ is also called the Skorohod integral of $u$  and we also denote this integral by $\de(u)=\int_{\RR_+\times \RR^d} 
u(t,x) W(dt, dx)$.

For any integer $n\ge 0$ we denote by $\mathbf{H}_n$ the $n$-th Wiener chaos of $W$. We recall that $\mathbf{H}_0$ is simply  $\R$ and for $n\ge 1$, $\mathbf {H}_n$ is the closed linear subspace of $L^2(\Omega)$ generated by the random variables $\{ H_n(W(\phi)),\phi \in \HH, \|\phi\|_{\HH}=1 \}$, where $H_n(x)
= e^{ \frac{x^2}{2}}\frac {d^n}{dx^n}e^{ -\frac{x^2}{2}}$ is the $n$-th Hermite polynomial.
For any $n\ge 1$, we denote by $\HH^{\otimes n}$ (resp. $\HH^{\odot n}$) the $n$-th tensor product (respectively the $n$-th  symmetric tensor product) of $\HH$. Then, the mapping $I_n(\phi^{\otimes n})= H_n(W(\phi))$ can be extended to a linear isometry between    $\HH^{\odot n}$ (equipped with the modified norm $\sqrt{n!}\| \cdot\|_{\HH^{\otimes n}}$) and $\mathbf{H}_n$. The chaos expansion theorem says that any square integrable  nonlinear functional $F$ of $W$ can be expressed a
\begin{equation}\label{eq:chaos-dcp}
F= \EE \lc F\rc + \sum_{n=1} ^\infty I_n(f_n),
\end{equation}
where the series converges in $L^2(\Omega)$, and the elements $f_n \in \HH ^{\odot n}$, $n\ge 1$, are determined by $F$.

The Skorohod integral (or divergence) of a random field $u$ can be
computed by  using the Wiener chaos expansion. More precisely,
suppose that $u=\{u(t,x) , (t,x) \in \R_+ \times\R\}$ is a random
field such that for each $(t,x)$, $u(t,x)$ is a  square-integrable random  variable, then  for each $(t,x)$ we have a Wiener chaos expansion of the form
\begin{equation}  \label{exp1}
u(t,x) = \EE \lc u(t,x) \rc + \sum_{n=1}^\infty I_n (f_n(\cdot,t,x)).
\end{equation}
Then  $u$ belongs to the domain of the divergence operator (that
is, $u$ is Skorohod integrable with respect to $W$) if and only if
the following series converges in $L^2(\Omega)$
\begin{equation}\label{eq:delta-u-chaos}
\delta(u)= \int_0 ^\infty \int_{\R^d}  u(t,x) \,   W(dt  , dx )
= W(\EE [u]) + \sum_{n=1}^\infty I_{n+1} (\widetilde{f}_n),
\end{equation}
where $\widetilde{f}_n$ denotes the symmetrization of $f_n$ in all its $n+1$ variables.

We shall use the following lemma  which has been used in  \cite{HHLNT},   \cite{HHLNT1}, \cite{HHNT}. 
\begin{lemma}\label{lem:intg-simplex}
For $m\ge 1$ let $\al   \in (-1 , 1)^m$  and  set $|\alpha |= \sum_{i=1}^m
\alpha_i  $. For $t\in\ott$, the $m$-th  dimensional simplex over $\ot$ is denoted by
$T_m(t)=\{(r_1,r_2,\dots,r_m) \in \R^m: 0<r_1  <\cdots < r_m < t\}$.
Then there is a constant $c>0$ such that
\[
J_m(t, \alpha):=\int_{T_m(t)}\prod_{i=1}^m (r_i-r_{i-1})^{\alpha_i}
dr \le \frac { c^m t^{|\alpha|+m } }{ \Gamma(|\alpha|+m +1)},
\]
where by convention, $r_0 =0$.
\end{lemma}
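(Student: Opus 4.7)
The plan is to derive the exact value of $J_m(t,\alpha)$ in closed form and then extract the stated bound. The key ingredient is the classical Beta function identity
\[
\int_0^t s^{p}(t-s)^{q}\, ds \;=\; t^{p+q+1}\,\frac{\Gamma(p+1)\,\Gamma(q+1)}{\Gamma(p+q+2)}, \qquad p,q>-1,
\]
which is the mechanism by which Gamma factors propagate when one peels off the outermost (or innermost) variable of the ordered simplex one at a time.

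I would proceed by induction on $m$. The base case $m=1$ is immediate:
\[
J_1(t,\alpha_1) \;=\; \int_0^t r_1^{\alpha_1}\, dr_1 \;=\; \frac{t^{\alpha_1+1}}{\alpha_1+1} \;=\; \frac{\Gamma(\alpha_1+1)\, t^{\alpha_1+1}}{\Gamma(\alpha_1+2)}.
\]
For the inductive step, I condition on $r_1 = s$ and translate the remaining variables by $r_i \mapsto r_i - s$ for $i\ge 2$, which produces the recursion
\[
J_m(t,\alpha) \;=\; \int_0^t s^{\alpha_1}\, J_{m-1}\bigl(t-s,(\alpha_2,\dots,\alpha_m)\bigr)\, ds.
\]
Inserting the inductive hypothesis and applying the Beta integral above with $p=\alpha_1$ and $q=|\alpha|-\alpha_1+m-1$ (both strictly greater than $-1$ because $\alpha_i\in(-1,1)$) yields the exact identity
\[
J_m(t,\alpha) \;=\; \frac{t^{|\alpha|+m}\,\prod_{i=1}^m \Gamma(\alpha_i+1)}{\Gamma(|\alpha|+m+1)}.
\]
An equally short alternative is the direct substitution $u_i = r_i - r_{i-1}$, which transforms $T_m(t)$ into the standard simplex $\{u\in(0,\infty)^m : \sum u_i < t\}$ and reduces the problem to a Dirichlet integral evaluated by the same Beta identity.

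To finish, it remains to absorb the Gamma numerator into a factor $c^m$. Since $\beta\mapsto \Gamma(\beta+1)$ is continuous and bounded on any compact subset of $(-1,\infty)$, one takes $c$ to be a uniform upper bound of $\Gamma(\alpha_i+1)$ over the range of exponents arising in the applications. The only genuinely subtle point, and the main obstacle to a literal reading of the statement, is that $\Gamma(\beta+1)\to\infty$ as $\beta\downarrow -1$, so no universal constant $c$ works over the full open interval $(-1,1)$; however, in the subsequent use of this lemma the $\alpha_i$'s will always lie in a compact subset of $(-1,\infty)$, which makes $c$ finite and harmless. Once this is granted the stated inequality is immediate from the exact formula.
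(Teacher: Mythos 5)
Your proof is correct, and it is essentially the standard argument: the paper itself gives no proof of this lemma (it only cites earlier works such as \cite{HHLNT} and \cite{HHNT}), and the proof in those references is exactly the Beta-function induction you carry out, yielding the exact identity $J_m(t,\alpha)=t^{|\alpha|+m}\prod_{i=1}^m\Gamma(\alpha_i+1)/\Gamma(|\alpha|+m+1)$. Your closing caveat is also well taken: since $\Gamma(\alpha_i+1)\to\infty$ as $\alpha_i\downarrow -1$, no single constant $c$ works uniformly over all of $(-1,1)^m$, so the constant must be read as depending on a lower bound for the exponents; this is harmless here because in every application in the paper the exponents (e.g.\ $-\tfrac{1+2\alpha}{4-2\alpha_0}$ or $-\tfrac{2\beta}{2-\alpha_0}$) are bounded away from $-1$ by the standing hypotheses.
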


We shall also use the following classical result. The 
Mittag-Leffler function is defined as 
\[
E_\al(z)=\sum_{n=0}^\infty \frac{z^n}{\Gamma(\al n+1)}\,.
\]
It is well-known that  $|E_\al(z)|\le C e^{|z|^{\frac{1}{\al}}}$.
By Stirling formula one also have
\begin{equation}
 \sum_{n=1}^\infty \frac{\left|z\right|^n }{(n!)^\al} \le  C \exp\left\{|z|^{\frac{1}{\al}}\right\}\,. \label{e.mittag_leffler}
\end{equation}

We shall use $c$ and $C$ to denote constants. For example, we may write $c^{2n}$ as $c^n$.  

\section{Existence and uniqueness via chaos expansions}\label{subsec: chaos}
 
Let $u=\{u(t,x), 0 \leq t \leq T, x \in \mathbb{R}^d  \}$ be a real-valued   stochastic process  such that for all $t\in[0,T]$ and $x\in\R^d$ the process 
$\{p_{t-s}(x-y)u(s,y) \1_{[0,t]}(s), 0 \leq s \leq t, y \in \mathbb{R}^d\}$ is Skorohod integrable, where $p_t(x)=(2\pi t)^{-d/2} \exp(-\frac{|x|^2}{2})$   is the heat kernel on the real line related to $\frac{1}{2}\Delta$. 
\begin{definition}We say that $u$ is a mild solution of \eqref{spde} if for all $t \in [0,T]$ and $x\in \mathbb{R}^d$ we have
\begin{equation}\label{eq:mild-formulation sigma}
u(t,x)= p_t*u_0(x) + \int_0^t \int_{\mathbb{R}^d}p_{t-s}(x-y)u(s,y) W(ds,dy) \quad a.s.,
\end{equation}
where the stochastic integral is understood  in the    Skorohod  sense. 
\end{definition}

 We make the following assumptions on the covariance structure of the noise $\dot W$.
 \begin{hypothesis}\label{h.2.1} 
 There are positive constants $\al_0\in[0, 1]$ and $C$ such that
\begin{equation}
\ga_0(t)\le C |t|^{-\al_0}\quad \hbox{for all $t>0$}\,. 
\end{equation}  
We also allow $\al_0=1$ and in this case, we take $\ga_0(t)=\de(t)$
(the Dirac delta function).  
\end{hypothesis}

 
\begin{hypothesis}\label{h.2.2}  
Regarding the spectral density $\mu(\xi)$, we assume that it satisfies one of the following  
two   hypotheses. 
\begin{enumerate}
\item[(i)]\  There  are  positive constants $\al_i \in (-1, 0]$ ($i=1,\dots,d$) and  $C$ such that   
\begin{equation} 
  \mu(\xi) \le C\prod_{i=1}^d  |\xi_i|   ^{\al_i} \quad \hbox{for all $\xi\in \RR^d$}\,. \label{e.h.2.2i}
\end{equation} 
We denote  $\al=\al_1+\cdots+\al_d $ and we also assume $2\al_0+\al<4$.   

\item[(ii)] The space dimension $d=1$  and 
there are   positive constants $\al >0$,  $C$ such that   
\begin{equation} 
\al+\al_0<3/2\,,\quad {\rm and}\quad   \mu(\xi) \le C   |\xi |   ^{ \al } \quad \hbox{for all $\xi\in \RR $}\,. \label{e.h.2.2ib}
\end{equation} 
\end{enumerate}
\end{hypothesis}

Let us now state a new existence and uniqueness theorem for our equation of interest.

\begin{theorem}\label{thm:exist-uniq-chaos}
Suppose that the hypotheses \ref{h.2.1}  and \ref{h.2.2}    are satisfied  and   that the initial condition $u_0$ satisfies
\begin{equation}\label{cond:fu0}
\int_{\RR } \left[1+|\xi|^{\al/2}\right] e^{-s|\xi|^2} |\hat u_0(\xi)|d\xi \le C s^{-\beta}
\end{equation}
for some $\beta<1-\frac{\al_0}{2}$. 
Then there exists a unique    solution to equation \eqref{spde},
that is,  there is  a unique process $u$ 
such that  $p_{t-\cdot}(x-\cdot)u$ is Skorohod integrable for any
$(t,x)\in\ott\times\R^d$ and relation  \eqref{eq:mild-formulation sigma}
holds true.  Moreover, under the hypothesis \ref{h.2.2} (ii) 
we have the following moment   bounds for the solution:
\begin{equation}
\EE  |u(t,x) |^p \le   C\exp\left\{ c p^{\frac{3-\al}{1-\al}} t^{\frac{3-2\al_0 -\al}{1- \al} }    \right\} \,, \label{e.moment_bounds}
 \end{equation}
 where  $C$ and $c$ are two constants independent of $p$, $t$ and $x$.  
\end{theorem}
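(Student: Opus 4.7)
The plan is to construct the solution via its Wiener chaos expansion. Writing $u(t,x)=\sum_{n\ge 0}I_n(f_n(\cdot,t,x))$ and substituting into the mild formulation \eqref{eq:mild-formulation sigma}, the recursion coming from \eqref{eq:delta-u-chaos} forces $f_0(t,x)=p_t*u_0(x)$ and each $f_n$ to be the symmetrization in the variables $(s_1,y_1),\ldots,(s_n,y_n)$ of the simplex kernel
\[
g_n(s_1,y_1,\ldots,s_n,y_n;t,x)=p_{t-s_n}(x-y_n)\prod_{i=1}^{n-1}p_{s_{i+1}-s_i}(y_{i+1}-y_i)\,(p_{s_1}*u_0)(y_1)\,\mathbf{1}_{\{0<s_1<\cdots<s_n<t\}}.
\]
Uniqueness is then automatic: any $L^2(\Omega)$ solution has exactly this chaos expansion. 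Thus existence and the moment bound both reduce to a quantitative estimate on $n!\|f_n(\cdot,t,x)\|^2_{\HH^{\otimes n}}$.

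For this estimate I would take the Fourier transform in the spatial variables. Using $\widehat{p_t}(\xi)=e^{-t|\xi|^2/2}$, the product of heat kernels in $g_n$ becomes, in Fourier variables $\xi_1,\ldots,\xi_n$, the Gaussian $\exp(-\tfrac12\sum_i(s_{i+1}-s_i)|\xi_1+\cdots+\xi_i|^2)$ times $\hat u_0$ at the total momentum. Pairing two such Gaussians against $\prod\mu(\xi_i)\,d\xi_i$ in the inner product \eqref{eq: H_0 element H prod} and sequentially integrating $\xi_n,\xi_{n-1},\ldots$ using Hypothesis \ref{h.2.2}(ii), together with the control \eqref{cond:fu0} on $\hat u_0$, produces a product of factors $(s_{i+1}-s_i)^{-(1+\alpha)/2}$ in the time increments. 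Combining this with Hypothesis \ref{h.2.1} for $\gamma_0$ and a double application of Lemma \ref{lem:intg-simplex} (first integrating the $r$-simplex against $\gamma_0$, then the $s$-simplex) yields
\[
n!\|f_n(\cdot,t,x)\|^2_{\HH^{\otimes n}}\le\frac{C^n\,t^{n(3-2\alpha_0-\alpha)/2}}{\Gamma\bigl(n(1-\alpha)/2+1\bigr)},
\]
where the hypothesis $\alpha+\alpha_0<3/2$ is precisely what ensures the denominator exponent is positive. The parallel computation under Hypothesis \ref{h.2.2}(i) is analogous but produces $(d+\alpha)/2$ in place of $(1+\alpha)/2$, still summable thanks to $2\alpha_0+\alpha<4$, which suffices for existence and uniqueness but not for the sharp moment bound.

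The moment estimate follows by hypercontractivity: $\|I_n(f_n)\|_{L^p(\Omega)}\le(p-1)^{n/2}\sqrt{n!\|f_n\|^2_{\HH^{\otimes n}}}$, so by Stirling and \eqref{e.mittag_leffler} applied with exponent $(1-\alpha)/2$,
\[
\|u(t,x)\|_{L^p(\Omega)}\le\sum_{n\ge0}\frac{(cp\,t^{(3-2\alpha_0-\alpha)/2})^{n/2}}{\sqrt{\Gamma(n(1-\alpha)/2+1)}}\le C\exp\bigl(cp^{2/(1-\alpha)}t^{(3-2\alpha_0-\alpha)/(1-\alpha)}\bigr),
\]
and raising to the $p$-th power turns the exponent of $p$ into $(3-\alpha)/(1-\alpha)$, giving \eqref{e.moment_bounds}.

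The main obstacle is the sequential spatial Fourier integration. Because successive heat kernels couple the Fourier variables through the partial sums $\xi_1+\cdots+\xi_i$, the integrals do not decouple; one must eliminate the top momentum $\xi_n$ against $\mu(\xi_n)$ and against the Gaussian factor in $(s_{n+1}-s_n)$, then repeat with the resulting modified quadratic form in the remaining $\xi$'s. Doing this $n$ times with sharp tracking of constants is what produces the denominator $\Gamma(n(1-\alpha)/2+1)$ rather than a cruder $(n!)^c$, and the restriction $d=1$ in Hypothesis \ref{h.2.2}(ii) is precisely what keeps the resulting nested Gaussian-against-power-law integrals tractable enough for this sharp bookkeeping.
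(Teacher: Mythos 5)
Your overall architecture matches the paper's: chaos expansion with the explicit simplex kernels, Fourier transform in space, an estimate of $n!\|f_n\|^2_{\HH^{\otimes n}}$ of the form $c^n t^{n(3-2\al_0-\al)/2}(n!)^{-(1-\al)/2}$, then hypercontractivity and the Mittag--Leffler bound \eqref{e.mittag_leffler}; your final bookkeeping $p\cdot p^{2/(1-\al)}=p^{(3-\al)/(1-\al)}$ is also the paper's. However, there is a genuine gap in how you treat the temporal covariance. After Cauchy--Schwarz in the spatial variables one faces
\begin{equation*}
\int_{[0,t]^n}\int_{[0,t]^n}\psi_n(s)\,\psi_n(s')\prod_{i=1}^n\gamma_0(s_i-s_i')\,ds\,ds',
\end{equation*}
and the paper handles this with the Hardy--Littlewood(--Sobolev) inequality, bounding it by $c^n\bigl(\int_{[0,t]^n}\psi_n(s)^{2/(2-\al_0)}ds\bigr)^{2-\al_0}$. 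Your proposed ``double application of Lemma \ref{lem:intg-simplex}, first integrating the $r$-simplex against $\gamma_0$'' is not a workable substitute: Lemma \ref{lem:intg-simplex} treats a single simplex integral with consecutive-increment singularities, whereas here the two time simplices are coupled by $\prod_i|s_i-s_i'|^{-\al_0}$, and any crude decoupling (e.g.\ $ab\le\frac12(a^2+b^2)$) forces integrability of $\psi_n^2$, which requires roughly $\al<1/2$ rather than the hypothesis $\al+\al_0<3/2$. The exponent $2/(2-\al_0)$ from Hardy--Littlewood is exactly calibrated so that the worst singularity $(s_{i+1}-s_i)^{-(1+2\al)/(4-2\al_0)}$ is integrable precisely when $\al+\al_0<3/2$, and it is also what produces the $(n!)^{1-\al_0}$ factor feeding into the final $(n!)^{(\al-1)/2}$.

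A second, lesser gap is the spatial decoupling, which you flag as ``the main obstacle'' but do not resolve. The paper does not perform your sequential elimination of momenta; it changes variables to $\eta_i=\xi_{\si(i)}+\cdots+\xi_{\si(1)}-\zeta$ (which diagonalizes the Gaussian factors), uses the elementary bound $|\eta_i-\eta_{i-1}|^{\al}\le|\eta_i|^{\al}+|\eta_{i-1}|^{\al}$, and expands into $2^{n-1}$ terms indexed by $j\in D_n$ with $j_i\in\{0,\al,2\al\}$ and $|j|=(n-1)\al$. The resulting time singularities are $(s_{\si(i+1)}-s_{\si(i)})^{-(1+j_i)/2}$ with \emph{nonuniform} exponents, not the uniform $(1+\al)/2$ you assert; only their sum is the same, and the nonuniformity (in particular $j_i=2\al$) is what makes the condition $\al+\al_0<3/2$, rather than something weaker, the right threshold. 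You also omit the Cauchy--Schwarz (Jensen) step in the initial-condition variable $\zeta$ needed before these manipulations. These are fixable but essential ingredients, so as written the proposal does not yet constitute a proof.
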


\begin{remark} 
(i) It is known \cite{HHNT} that the theorem is  true when the hypotheses \ref{h.2.1} and \ref{h.2.2} (i) hold.  When $\ga_0(t)$ is the Dirac delta function (namely, when the noise $\dot W$ is white in time) and
when hypothesis \ref{h.2.2} (i) holds, the theorem is also true  (\cite{HHLNT1}).  

(ii) Under hypotheses \ref{h.2.1} and \ref{h.2.2} (i), the moments bounds is given by (if the initial condition is bounded below by a positive constant) 
\begin{equation}   \label{es0}
C\exp\left(C t^{\frac{4-2\al_0-\al}{2-\al}}p^{\frac{4-\al}{2-\al}}\right)\leq
\EE \lc u_{t,x}^p\rc\leq  C^{\prime}\exp\left(C^{\prime}
t^{\frac{4-2\al_0-\al}{2-\al}}p^{\frac{4-\al}{2-\al}}\right)\,. 
\end{equation}
It appears that there are some discrepancy between  \eqref{e.moment_bounds} and \eqref{es0}. This kind of phenomena was also observed in \cite{CHKN}. 
\end{remark}

\begin{proof}   We only need to show the theorem under the hypotheses \ref{h.2.1} and \ref{h.2.2} (ii) with $\al_0\in (0, 1)$.
If a solution $u$ is square integrable, then according to the general chaos expansion   \eqref{eq:delta-u-chaos},     $u(t,x)$ admits the following Wiener chaos expansion
\begin{equation}\label{eq:chaos-expansion-u(tx)}
u(t,x)=\sum_{n=0}^{\infty}I_n(f_n(\cdot,t,x))\,,
\end{equation}
where for each $(t,x)$, $f_n(\cdot,t,x)$ is a symmetric element in
$\HH^{\otimes n}$.  
Hence,  if one iterates    \eqref{eq:mild-formulation sigma}, one can find an
explicit formula for the kernels $f_n$ for $n \geq 1$. 
\begin{multline}\label{eq:expression-fn}
f_n(s_1,x_1,\dots,s_n,x_n,t,x) 
=\frac{1}{n!}p_{t-s_{\si(n)}}(x-x_{\si(n)})\cdots p_{s_{\si(2)}-s_{\si(1)}}(x_{\si(2)}-x_{\si(1)})
p_{s_{\si(1)}}u_0(x_{\si(1)})\,,
\end{multline}
where $\si$ denotes the permutation of $\{1,2,\dots,n\}$ such that $0<s_{\si(1)}<\cdots<s_{\si(n)}<t$
(see also  for instance,  formula (4.4) in \cite{HN}, formula (3.8) in \cite{HHLNT1},  formula (3.3) in \cite{HHNT}).
Then, to show the existence and uniqueness of the solution it suffices to prove that for all $(t,x)$ we have
\begin{equation}\label{chaos}
\sum_{n=0}^{\infty}n!\|f_n(\cdot,t,x)\|^2_{\HH^{\otimes n}}< \infty\,.
\end{equation}

The    Fourier transform of $f_n(t,x)$ (with respect to the variables $x_1, \cdots, x_n$ in \eqref{eq:expression-fn})  is 
\begin{align*}
\hat  f_n(s_1,\xi_1,\dots,s_n,\xi_n,t,x)&=
\frac{c ^n}{n!}  \int_\RR g(t,x; s, \xi, \zeta) \hat u_0(\zeta) e^{-\frac {  s_{\sigma(1)}|\zeta|^2} 2} d\zeta,
\end{align*}
where we denote  $s_{\si(n+1)}=t$ by convention  and 
\begin{equation}
g_n(t,x; s, \xi, \zeta)=  e^{-\frac12 s_{\sigma(1)}|\zeta|^2}   \prod_{i=1}^n e^{-\frac{1}{2}(s_{\si(i+1)}-s_{\si(i)})|\xi_{\si(i)}+\cdots +
\xi_{\si(1)} -\zeta|^2}   { e^{-\iota x\cdot (\xi_{\sigma(n)}+ \cdots + \xi_{\sigma(1)}-\zeta)}}\,. \label{e.gn} 
\end{equation}
Hence,   we have
\begin{align*}
n!\| f_n(\cdot,t,x)\|_{\HH^{\otimes n}}^2 
&= \frac{c^{2n}
}{n!}\int_{[0,t]^{2n} }\int_{\RR^n}
 \int_{\RR^2} g(t,x; s, \xi, \zeta) \bar g(t,x; s', \xi, \zeta')  \\
 &\qquad \qquad  \ga_0(s-s') 
  dsds' \mu(\xi_1)d \xi_1\cdots\mu(\xi_n)d \xi_n 
  \hat u_0(\zeta)    d\zeta \hat u_0(\zeta')     d\zeta'\,,
\end{align*}
where $\ga_0(s-s')=\prod_{i=1}^n \ga_0(s_i-s_i')$.  
Using  Cauchy-Schwarz inequality we get
\begin{align}
n!\| f_n(\cdot,t,x)\|_{\HH^{\otimes n}}^2 
&\le
\frac{c^{2n}}{n!}
 \int_{\RR^2}   \int_{[0,t]^{2n} } \sqrt{\phi_n(s, \zeta)\phi_n(s', \zeta')} \ga_0(s-s')dsds'  
\left|\hat u_0(\zeta)\right| \left|\hat u_0(\zeta')\right| d\zeta d\zeta^{\prime} \nonumber\\
&=\frac{c^{2n}}{n!}
 \int_{\RR^2}   \int_{[0,t]^{2n} } \psi_n(s) \psi_n(s') \ga_0(s-s')dsds'  
 \,,\label{e.by_psi_n}
\end{align}
where
\begin{align*}
\phi_n(s, \zeta)
=\int_{\RR^{n }} |g_n(t,x; s, \xi, \zeta)|^2\mu(\xi_1)d\xi_1\cdots \mu(\xi_n)d \xi_n
\quad\textrm{and}\quad
\psi_n(s)=\int_{\RR} \sqrt{\phi_n(s, \zeta)} |\hat u_0(\zeta)| d\zeta\,. 
\end{align*}
From \eqref{e.gn} and the assumption \ref{h.2.2}, it follows that
\begin{align*}
\phi_n(s, \zeta)
\le \int_{\RR^{n }} e^{-s_{\si(1)}|\zeta|^2} 
\prod_{i=1}^n   e^{- (s_{\si(i+1)}-s_{\si(i)})|\xi_{\si(i)}+\cdots +
\xi_{\si(1)} -\zeta|^2}  |\xi_1|^\al\cdots|\xi_n|^\al d\xi_1\cdots d\xi_n\,.
\end{align*}
Making  the change of variables $\eta_{i}:=\xi_{\si(i)}+\cdots +
\xi_{\si(1)} -\zeta$,  and using  the trivial bound $|\eta_{i}-\eta_{i-1}|^{\al}\le |\eta_{i-1}|^{\al}+|\eta_{i}|^{\al}$, we obtain
\begin{align*}
\phi_n(s, \zeta)
&\le e^{-s_{\si(1)}|\zeta|^2}  \int_{\RR^n}  
\prod_{i=1}^n   e^{- (s_{\si(i+1)}-s_{\si(i)})|\eta_i|^2}   |\eta_1+\zeta| ^\al  \prod_{i=2}^n
  |\eta_i-\eta_{i-1}|^\al  d\eta_1\cdots d\eta_n\\
 &\le e^{-s_{\si(1)}|\zeta|^2}  \int_{\RR^n}  
\prod_{i=1}^n   e^{- (s_{\si(i+1)}-s_{\si(i)})|\eta_i|^2}   (|\eta_1|^\al+|\zeta|  ^\al)  \prod_{i=2}^n
  (|\eta_i|^\al +|\eta_{i-1}| ^\al)  d\eta_1\cdots d\eta_n\,. 
\end{align*}
Expanding  the product $\prod_{i=2}^{n} (|\eta_{i}|^{\al }+|\eta_{i-1}|^{\al })$ in the above integral  we obtain an expression of the form $\sum_{j\in D_{n}} \prod_{i=1}^{n} |\eta_{i}|^{j_{i}}$, where $D_{n}$ is a subset of multi-indices of $\left\{ (j_1, \cdots, j_n): j_i\in\{0, \al, 2\al\}\right\}$    of length $n-1$.  
The complete description of $D_{n}$ is not necessary. All we need are the following facts: $\text{Card}(D_{n})=2^{n-1}$ and for any $j\in  D_{n}$, $j_1\in\{0,\alpha\}$, $j_{i} \in \{0, \al, 2\al\}$ for $ i=2,\ldots, n$ and
\begin{equation*}
|j|\equiv \sum_{i=1}^{n} j_i = (n-1)\al\,.
\end{equation*}
This   expansion yields the following bound for $\phi_n(s, \zeta)$. 
\begin{eqnarray*}
\phi_n(s, \zeta) 
&\leq & |\zeta|^\al e^{-s_{\si(1)}|\zeta|^2}  \sum_{\alpha \in D_{n}}   \int_{\RR^n } \prod_{i=1}^n
e^{-  (s_{\si(i+1)}-s_{\si(i)})|\eta_{i}|^2} \prod_{i=1}^n |\eta_i|^{j_i }
 d\eta_1\cdots d\eta_n \\
&& \qquad +e^{-s_{\si(1)}|\zeta|^2} \sum_{\alpha \in D_{n}} \int_{\RR^n }\prod_{i=1}^n e^{-  (s_{\si(i+1)}-s_{\si(i)})|\eta_{i}|^2} |\eta_1|^{\al} \prod_{i=1}^n |\eta_i|^{j_i }
 d\eta_1\cdots d\eta_n  \,.
\end{eqnarray*}
Making  the  substitution  $\xi_{i}=  (s_{\sigma(i+1)}-s_{\sigma(i)} )^{1/2} \eta_{i}$ in the above integral, and noticing that $\int_{\R} e^{- \xi^{2}} |\xi|^{j_i}d\xi$ is bounded by a constant
for any $j_i>-1$, we see that
\begin{eqnarray}
\phi_{ n}(s, \zeta) 
&\le &   C  |\zeta|^{\al  }e^{-s_{\si(1)}|\zeta|^2 }
   \sum_{\alpha \in D
_{n}} \prod_{i=1}^n
 (s_{\sigma(i+1)}-s_{\sigma(i )}) ^{-\frac{1}{2}(1+j_i)}\nonumber \\
&&\qquad +C    e^{-s_{\si(1)}|\zeta|^2 }
   \sum_{\alpha \in D
_{n}}   (s_{\sigma(2)}-s_{\sigma( 1)} )^{-\frac{\al+j_1}{2}}\prod_{i=2}^n
 (s_{\sigma(i+1)}-s_{\sigma(i )} )^{-\frac{1}{2}(1+j_i)} \,. 
 \label{eqn:phi_n} 
\end{eqnarray}
Thus we have 
\begin{eqnarray*}
\psi_{ n}(s ) 
&\le &   C\int_\RR |\zeta|^{\al/2 }e^{-s_{\si(1)}|\zeta|^2/2}
|\hat u_0(\zeta)|  d\zeta    \sum_{\alpha \in D
_{n}} \prod_{i=1}^n
 (s_{\sigma(i+1)}-s_{\sigma(i )}) ^{-\frac{1}{4}(1+j_i)}\\
&&\qquad +C  \int_\RR e^{-s_{\sigma( 1)}|\zeta|^2/2}
|\hat u_0(\zeta)| d\zeta  \sum_{\alpha \in D
_{n}}   (s_{\sigma(2)}-s_{\sigma( 1)} )^{-\frac{\al+j_1}{4}}\prod_{i=2}^n
 (s_{\sigma(i+1)}-s_{\sigma(i)} )^{-\frac{1}{4}(1+j_i)} \,. 
\end{eqnarray*}
From the assumption on the initial condition \eqref{cond:fu0}  we see 
\begin{align}
\psi_{ n}(s ) 
&\le   C s_{\si(1)}^{-\be}    \sum_{\alpha \in D
_{n}} \prod_{i=1}^n
 (s_{\sigma(i+1)}-s_{\sigma(i )}) ^{-\frac{1}{4}(1+j_i)} 
 \nonumber \\&\quad+C  s_{\sigma( 1)}^{-\beta}  \sum_{\alpha \in D
_{n}}   (s_{\sigma(2)}-s_{\sigma(1)}  )^{-\frac{\al+j_1}{4}}\prod_{i=2}^n
 (s_{\sigma(i+1)}-s_{\sigma(i )})^{-\frac{1}{4}(1+j_i)} \,. \label{e.psi_bound}
\end{align}
Now we return to \eqref{e.by_psi_n}.   From  the Hardy-Littlewood inequality (see \cite[Inequality (2.4)]{HN} or \cite[Inequality (1.5)]{valkeila}), we see that
\begin{align*}
 n!\| f_n(\cdot,t,x)\|_{\HH^{\otimes n}}^2
 \le \frac{c^n}{n!}  \left(\int_{[0, t]^n} \psi_n(s)^{\frac{2}{2-\al_0}}ds \right)^{2-\al_0}\,. 
\end{align*}

Plugging \eqref{e.psi_bound} into the above expression and uing the inequality $(a_1+\cdots+a_m)^\al\le m^\al (a_1^\al+\cdots+a_m^\al)$ 
for all positive numbers $a_1, \cdots, a_m$ and positive $\al$, we end up with
\begin{eqnarray}
 n!\| f_n(\cdot,t,x)\|_{\HH^{\otimes n}}^2
 &\le& \frac{c^n}{n!}  \sum_{j\in D_n}\left(\int_{[0, t]^n}
 s_{\si(1)}^{-\frac{2\be}{2-\al_0}}     \prod_{i=1}^n
 (s_{\sigma(i+1)}-s_{\sigma(i )}) ^{-\frac{ (1+j_i)} {4-2\al_0}}    ds
 \right)^{2-\al_0}\nonumber \\
 &&\qquad + \frac{c^n}{n!}  \sum_{j\in D_n} \left(\int_{[0, t]^n}  
 s_{\sigma( 1)}^{-\frac{2\beta}{2-\al_0} }   
   (s_{\sigma(2)}-s_{\sigma(1)}  )^{-\frac{\al+j_1}{4-2\al_0}}\prod_{i=2}^n
 (s_{\sigma(i+1)}-s_{\sigma(i )})^{-\frac{  1+j_i}{4-2\al_0}}
  \right)^{2-\al_0}\nonumber\\
  &=& c^n (n!)^{1-\al_0}   \sum_{j\in D_n}\left(\int_{T_n(t)}
 s_{\si(1)}^{-\frac{2\be}{2-\al_0}}     \prod_{i=1}^n
 (s_{ i+1 }-s_{ i  }) ^{-\frac{ (1+j_i)} {4-2\al_0}}    ds
 \right)^{2-\al_0}\nonumber\\
 &&\qquad + c^n (n!)^{1-\al_0}  \sum_{j\in D_n} \left(\int_{T_n(t)}  
 s_{1}^{-\frac{2\beta}{2-\al_0} }   
   (s_{2}-s_{1}  )^{-\frac{\al+j_1}{4-2\al_0}}\prod_{i=2}^n
 (s_{ i+1 }-s_{ i  })^{-\frac{  1+j_i}{4-2\al_0}}\,. 
  \right)^{2-\al_0}\,. \nonumber\\ \label{e.fn_by_psi}
\end{eqnarray}
From our condition on $\beta$, we see that $\frac{2\be}{2-\al_0}<1$.
For $i\ge2$, the maximum value of $j_i$ is $2\al$  and in this case 
\[
\frac{  1+j_i}{4-2\al_0}=\frac{  1+2\al}{4-2\al_0}<1\,.
\]
Since $j_1\le \alpha$, 
\begin{equation*}
	\frac{\alpha+j_1}{4-2 \alpha_0}\le \frac{2 \alpha}{4-2 \alpha_0}<1\,.
\end{equation*}
It follows that all the integrals appeared in the above 
multiple integrals are finite.  Using Lemma \ref{lem:intg-simplex}, we see that 
\begin{eqnarray*}
 \int_{T_n(t)}
 s_{\si(1)}^{-\frac{2\be}{2-\al_0}}     \prod_{i=1}^n
 (s_{ i+1 }-s_{ i  }) ^{-\frac{ (1+j_i)} {4-2\al_0}}    ds
 &\le &\frac{c^nt^{\sum_{i=1}^n \left(1-\frac{1+j_i}{4-2\al_0}
 \right)+1-\frac{2\be}{2-\al_0}}}{ \Ga\left( \sum_{i=1}^n \left(1-\frac{1+j_i}{4-2\al_0}
 \right)+1-\frac{2\be}{2-\al_0}+1\right)}\\
 &\le& c^n t^{\tilde \al_0}t^ \frac{(3-2\al_0-\al)n}{4-2\al_0}  (n!)^{-\frac 
  { 3-2\al_0-\al }{4-2\al_0}}\,. 
\end{eqnarray*} 
In the same way we have 
\begin{eqnarray*}
 \int_{T_n(t)}
s_{1}^{-\frac{2\beta}{2-\al_0} }   
   (s_{2}-s_{1}  )^{-\frac{\al+j_1}{4-2\al_0}}\prod_{i=2}^n
 (s_{ i+1 }-s_{ i  })^{-\frac{  1+j_i}{4-2\al_0}}   ds 
 &\le& c^n t^{\tilde \al_1}t^ \frac{(3-2\al_0-\al)n}{4-2\al_0}  (n!)^{-\frac  {3-2\al_0-\al }{4-2\al_0}}\,. 
\end{eqnarray*} 
Here $\tilde \al_0$ and $\tilde \al_1$ are two constants, independent of $n$.  Inserting the above two bounds into \eqref{e.fn_by_psi}, we see that
\begin{eqnarray}
 n!\| f_n(\cdot,t,x)\|_{\HH^{\otimes n}}^2
 \le c^n t^{\tilde \al_2} (n!)^{1-\al_0-\frac{3-2\al_0-\al}{2}} t^{\frac{3-2\al_0 -\al}{2}n}\
 =c^n t^{\tilde \al_2} (n!)^{ -\frac{1-\al }{2}} t^{\frac{3-2\al_0 -\al}{2}n}\,.
 \label{e.3.18} 
\end{eqnarray}   
From hypercontractivity (\cite{hubook} and \cite{Nua}, we see that
\[
\|I_n(f_n(t,x))\|_p
\le \sqrt{p-1}^n   \|I_n(f_n(t,x))\|_2
\le c^n \sqrt{p }^n  t^{\tilde \al_2/2} (n!)^{ -\frac{ 1-\al }{4}} t^{\frac{3-2\al_0 -\al}{4}n}\,. 
\]
Applying the estimate \eqref{e.mittag_leffler}, we obtain
\[
\|u(t,x)\|_p\le \sum_{n=0}^\infty  p^{\frac{n}{2}}
 c^n t^{\tilde \al_2/2} (n!)^{ -\frac{1-\al }{4}} t^{\frac{3-2\al_0 -\al}{4}n}
 \le C\exp\left\{ c p^{\frac{2}{1-\al}} t^{\frac{3-2\al_0-\al}{1-\al} }    \right\} \,,
 \]
which implies \eqref{e.moment_bounds}. 
\end{proof}

\section{H\"older and joint H\"older continuity} 
In this section, we shall study the joint H\"older continuity of the solution $u(t,x)$ to \eqref{spde}.  The following lemma will be useful later.
  \begin{lemma}\label{lem:supzeta} Let $\mu$ satisfy the hypothesis \ref{h.2.2} (i). 
    For any $\beta\ge0$ and $s>0$, we have
    \begin{equation}\label{eqn:supzeta}
      \sup_{\zeta\in\RR^d}\int_{\RR^d}e^{- 2s|\xi-\zeta|^2}|\xi- \zeta|^{\beta}\mu(\xi)d \xi\lesssim s^{-\frac {\alpha+\beta}2}\,.
    \end{equation}
  \end{lemma}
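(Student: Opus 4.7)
The plan is to translate, rescale, and reduce to a uniform bound on a shifted Gaussian integral.

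I would first substitute $\eta=\xi-\zeta$ so the Gaussian factor no longer depends on $\zeta$:
\[
\int_{\RR^d} e^{-2s|\xi-\zeta|^2}|\xi-\zeta|^\beta \mu(\xi)\,d\xi = \int_{\RR^d} e^{-2s|\eta|^2}|\eta|^\beta \mu(\eta+\zeta)\,d\eta.
\]
Hypothesis \ref{h.2.2}(i) then gives $\mu(\eta+\zeta)\le C\prod_{i=1}^d|\eta_i+\zeta_i|^{\alpha_i}$. Next I would rescale $\eta=y/\sqrt{s}$: the Jacobian produces one power of $s$, the factor $|\eta|^\beta=s^{-\beta/2}|y|^\beta$ another, and each $|\eta_i+\zeta_i|^{\alpha_i}=s^{-\alpha_i/2}|y_i+\sqrt{s}\,\zeta_i|^{\alpha_i}$ another. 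Collecting these powers yields the claimed dependence on $s$, and with $\tilde\zeta:=\sqrt{s}\,\zeta$ the task reduces to bounding
\[
J(\tilde\zeta):=\int_{\RR^d} e^{-2|y|^2}|y|^\beta \prod_{i=1}^d |y_i+\tilde\zeta_i|^{\alpha_i}\,dy
\]
uniformly in $\tilde\zeta\in\RR^d$.

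To decouple coordinates in $J$, I would use the elementary inequality $|y|^\beta\le d^{\beta/2}\sum_{k=1}^d|y_k|^\beta$ and Fubini's theorem. Each resulting term is a product of one-dimensional integrals of the form
\[
K_{p,q}(c):=\int_\RR e^{-2y^2}|y|^p|y+c|^q\,dy,
\]
with $p\in\{0,\beta\}$, $q=\alpha_i\in(-1,0]$, and $c\in\RR$ arbitrary. It therefore suffices to prove that $\sup_{c\in\RR}K_{p,q}(c)<\infty$ for every such $(p,q)$. I would verify this by splitting at $|y+c|=1$: on $\{|y+c|\le 1\}$ the singular factor $|y+c|^q$ is locally integrable because $q>-1$, while on this region $|y|\le |c|+1$ and $|y|\ge(|c|-1)_+$, so the Gaussian satisfies $e^{-2y^2}\le e^{-2(|c|-1)_+^2}$, which crushes the polynomial bound $(|c|+1)^p$ uniformly in $c$; on $\{|y+c|>1\}$, one has $|y+c|^q\le 1$ (because $q\le 0$), and what remains is a standard Gaussian moment $\int e^{-2y^2}|y|^p\,dy<\infty$.

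The main obstacle is in the inner region of the one-dimensional bound: one must confirm that the super-polynomial Gaussian decay $e^{-2(|c|-1)_+^2}$ really dominates the polynomial growth $(|c|+1)^p$ uniformly in the shift $c$. This is routine but is the only point where care is needed. Once $\sup_c K_{p,q}(c)<\infty$ is in hand, the product structure of $J(\tilde\zeta)$ inherits a bound independent of $\tilde\zeta$, and unwinding the dilation recovers the stated estimate \eqref{eqn:supzeta}.
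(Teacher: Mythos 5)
Your reduction to the uniform one--dimensional bound $\sup_{c}K_{p,q}(c)<\infty$ is carried out correctly and is more self-contained than the paper's argument, which instead absorbs $|\xi-\zeta|^{\beta}$ into half of the Gaussian via $C_\beta=\sup_{x>0}e^{-x}x^{\beta}$ at the cost of $s^{-\beta/2}$ and then quotes Lemma A.1 of \cite{HNS} for the remaining integral $\int e^{-s|\xi-\zeta|^2}\prod_i|\xi_i|^{\alpha_i}d\xi$. The problem is the power count, which you pass over with ``collecting these powers yields the claimed dependence on $s$.'' It does not. The substitution $\eta=y/\sqrt{s}$ contributes the Jacobian $s^{-d/2}$ (not a single power of $s$ that can be ignored), and together with $s^{-\beta/2}$ from $|\eta|^{\beta}$ and $s^{-\alpha_i/2}$ from each factor $|\eta_i+\zeta_i|^{\alpha_i}$ your computation produces
\[
s^{-\frac{d+\alpha+\beta}{2}}\,J\bigl(\sqrt{s}\,\zeta\bigr),
\]
so what you have actually proved is the estimate with exponent $-\frac{d+\alpha+\beta}{2}$, not $-\frac{\alpha+\beta}{2}$. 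The extra $s^{-d/2}$ is not a removable loss from a crude step: taking $\zeta=0$, $\beta=0$ and $\mu(\xi)=\prod_i|\xi_i|^{\alpha_i}$ (which is admissible under Hypothesis \ref{h.2.2}(i)), the integral equals a constant times $s^{-(d+\alpha)/2}$ exactly, which is strictly larger than $s^{-\alpha/2}$ as $s\to0$. So no rearrangement of your argument can recover the exponent in \eqref{eqn:supzeta} as written.

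To be fair, the discrepancy you are papering over is the same one hidden in the paper's citation: the exponent $s^{-\alpha/2}$ is the one delivered by Lemma A.1 of \cite{HNS} under the normalization $\mu(\xi)\le C\prod_i|\xi_i|^{\alpha_i-1}$ (so that $d+\sum_i(\alpha_i-1)=\alpha$), whereas Hypothesis \ref{h.2.2}(i) as printed bounds $\mu$ by $\prod_i|\xi_i|^{\alpha_i}$. Under the printed hypothesis your method gives the correct exponent $-\frac{d+\alpha+\beta}{2}$, and \eqref{eqn:supzeta} should be read with that exponent (or the hypothesis read with $\alpha_i-1$ in place of $\alpha_i$). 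A complete write-up must therefore either adopt the \cite{HNS} normalization explicitly or carry the $d/2$ through to the conclusion; in either case you need to display the Jacobian's contribution rather than assert that the powers of $s$ ``collect'' to the claimed exponent.
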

  \begin{proof}
    Put $C_\beta:=\sup_{x>0} e^{-x}x^\beta$, which is a finite constant. We note that
    \begin{align*}
      \int_{\RR^d}e^{- 2s|\xi-\zeta|^2}|\xi- \zeta|^{\beta}\mu(\xi)d \xi
      &\le C_\beta s^{-\frac \beta2}\int_{\RR^d}e^{-s|\xi- \zeta|^2}\mu( \xi)d \xi
      \\&=C_\beta s^{-\frac \beta2}\int_{\RR^d}e^{-s|\xi- \zeta|^2}|\xi_1|^{\al_1}d\xi_1\cdots |\xi_d|^{\alpha_d}d \xi_d \,.
    \end{align*}
From Lemma A.1 of \cite{HNS}, we have
\begin{equation*}
	\int_{\RR^d}e^{-s|\xi- \zeta|^2}|\xi_1|^{\alpha_1}d\xi_1\cdots |\xi_d|^{\alpha_d}d \xi_d \lesssim s^{-\frac \alpha2}\,.
\end{equation*}
This implies the result.
  \end{proof}

To simplify notation  we denote $u_n(t,x)=I_{n,t} (f_n(t,x,\cdot))$,  where $f_n$ is the $n$-th chaos kernel defined in \eqref{eq:expression-fn} and  we 
explicitly express the dependence of the multiple integral of $f_n$ on  $t$:
\[
I_{n,t}(f_n(t,x))=\int_{[0, t]^n} f_n(t,x; s_s, \cdots, s_n, x_1, \cdots, x_n) 
W(ds_s, dx_1)\cdots W(ds_n, dx_n)
\,.
\] Thus, the solution
$u(t,x)$ can be written as its chaos expansion: $u(t,x)=\sum_{n=0}^\infty u_n(t,x)$.   
  \begin{proposition}\label{prop:unhder} Assuming that hypothesis \ref{h.2.1} holds   and $u_0$ satisfies \eqref{cond:fu0}. 
  \begin{enumerate}
  \item[(i)] Let  the hypothesis \ref{h.2.2} (i) be  satisfied  and 
  let $\bar  \alpha_0$ and $\bar\alpha$ be in $[0,1]$ such that
  \begin{equation}
  2\bar \alpha_0 +\bar \alpha <2- \alpha_0- \frac\alpha2\,. \label{e.al-be-de} 
  \end{equation}
  [It is easy to see the right hand side is positive under our assumption.] \ 
  Then,  there is a positive constant $c $ such that for every   $t\ge r\ge 0$ and $x,y\in\RR^d$
  \begin{equation}
  \EE \left|u_n(t,y)-u_n(r,y)-u_n(t,x)+u_n(r,x)\right|^2
  \le c^n  (n!)^{\frac{\alpha }{2}-1} 
  |t-r|^{2\bar \alpha_0} |x-y|^{2 \bar \alpha}  t^{\frac{4-2\al_0-\al}{2}n}    \,.
  \label{e.4.5}
  \end{equation}
  \item[(ii)] Let  the hypothesis \ref{h.2.2} (ii) be  satisfied  and 
  let $\bar  \alpha_0$ and $\bar\alpha$ be in $[0,1]$ such that
  \begin{equation}
  2\bar \alpha_0 +\bar \alpha <\frac{3-2\al_0-\al}{2}\,. \label{e.al-be-de} 
  \end{equation} 
  Then,  there is a positive constant $c $ such that for every   $t\ge r\ge 0$ and $x,y\in\RR^d$
  \begin{equation}
  \EE \left|u_n(t,y)-u_n(r,y)-u_n(t,x)+u_n(r,x)\right|^2
  \le c^n  (n!)^{\frac{\alpha -1}{2} } 
  |t-r|^{2\bar \alpha_0} |x-y|^{2 \bar \alpha}  t^{\frac{3-2\al_0-\al}{2}n}    \,.
  \label{e.4.5a}
  \end{equation}
  \end{enumerate} 
  \end{proposition}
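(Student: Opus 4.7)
By linearity of multiple Wiener--It\^o integrals and the isometry $\EE|I_n(g)|^2=n!\|g\|_{\HH^{\otimes n}}^2$,
\begin{equation*}
\EE|u_n(t,y)-u_n(r,y)-u_n(t,x)+u_n(r,x)|^2 = n!\|\Delta_n\|_{\HH^{\otimes n}}^2,
\end{equation*}
where $\Delta_n := f_n(t,y,\cdot)\one_{[0,t]^n} - f_n(r,y,\cdot)\one_{[0,r]^n} - f_n(t,x,\cdot)\one_{[0,t]^n} + f_n(r,x,\cdot)\one_{[0,r]^n}$. I decompose $\Delta_n=\Delta_n^{(1)}+\Delta_n^{(2)}$ according to whether the time arguments lie in $[0,r]^n$ or whether at least one exceeds $r$. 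On the ordered interior region for $\Delta_n^{(1)}$, namely $\{0<s_{\sigma(1)}<\cdots<s_{\sigma(n)}<r\}$, the Fourier representation \eqref{e.gn} shows that the only $(x,y)$-dependence of $g_n$ is the phase $e^{-\iota x\cdot\eta_n}$ and the only $(t,r)$-dependence is the outermost Gaussian $e^{-(t-s_{\sigma(n)})|\eta_n|^2/2}$, where $\eta_n:=\xi_{\sigma(n)}+\cdots+\xi_{\sigma(1)}-\zeta$. Hence the rectangular increment factorizes and the elementary bounds
\begin{equation*}
|e^{-\iota y\cdot\eta_n}-e^{-\iota x\cdot\eta_n}|\le 2|y-x|^{\bar\alpha}|\eta_n|^{\bar\alpha}, \quad |e^{-\tau_1|\eta_n|^2/2}-e^{-\tau_2|\eta_n|^2/2}|\le e^{-\min(\tau_1,\tau_2)|\eta_n|^2/2}|\tau_1-\tau_2|^{\bar\alpha_0}|\eta_n|^{2\bar\alpha_0}
\end{equation*}
yield $|\widehat{\Delta_n^{(1)}}|^2\lesssim |y-x|^{2\bar\alpha}|t-r|^{2\bar\alpha_0}|\eta_n|^{2\bar\alpha+4\bar\alpha_0}|\hat f_n(r,0,\cdot)|^2$ on this region.

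For $\Delta_n^{(2)}$, the ordering forces $s_{\sigma(n)}\in(r,t]$. The phase bound again provides the factor $|y-x|^{2\bar\alpha}|\eta_n|^{2\bar\alpha}$, and the time-increment gain is extracted via the elementary inequality
\begin{equation*}
\one_{(r,t]}(s_{\sigma(n)})\le (t-r)^{2\bar\alpha_0}(t-s_{\sigma(n)})^{-2\bar\alpha_0},
\end{equation*}
which holds because $s_{\sigma(n)}\ge r$ implies $t-s_{\sigma(n)}\le t-r$. The net effect is that $|\widehat{\Delta_n^{(2)}}|^2$ picks up the weight $|y-x|^{2\bar\alpha}|t-r|^{2\bar\alpha_0}|\eta_n|^{2\bar\alpha}(t-s_{\sigma(n)})^{-4\bar\alpha_0}$, whose contribution to the last-gap singularity in $\phi_n$ is identical to that of the extra weight $|\eta_n|^{4\bar\alpha_0+2\bar\alpha}$ from $\Delta_n^{(1)}$ after $\xi_n$-integration.

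From here I run the $\phi_n$--$\psi_n$--Hardy--Littlewood--simplex chain of Theorem~\ref{thm:exist-uniq-chaos} verbatim, with the sole modification that the bound on $\phi_n$ gains an additional factor $(s_{\sigma(n+1)}-s_{\sigma(n)})^{-(\bar\alpha+2\bar\alpha_0)}$ via Lemma~\ref{lem:supzeta} in case~(i) or the one-dimensional Gaussian integral in case~(ii). Crucially, $|\eta_n|$ appears in only the single factor $(|\eta_n|^\alpha+|\eta_{n-1}|^\alpha)$ of the product expansion of $\prod_{i=2}^n(|\eta_i|^\alpha+|\eta_{i-1}|^\alpha)$ used in the proof of Theorem~\ref{thm:exist-uniq-chaos}, so that the maximum value of $j_n$ in the index set $D_n$ is $\alpha$ (not $2\alpha$). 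After taking square roots and applying the Hardy--Littlewood inequality, the integrability requirement on the last gap becomes $(1+\alpha+2\bar\alpha+4\bar\alpha_0)/(2(2-\alpha_0))<1$, which rearranges to precisely the hypothesis $2\bar\alpha_0+\bar\alpha<(3-2\alpha_0-\alpha)/2$ of case~(ii); case~(i) is treated identically and gives $2\bar\alpha_0+\bar\alpha<2-\alpha_0-\alpha/2$. Lemma~\ref{lem:intg-simplex} then delivers the factorial gain $(n!)^{(\alpha-1)/2}$ and $t$-power $t^{(3-2\alpha_0-\alpha)n/2}$ (resp.\ $(n!)^{\alpha/2-1}$ and $t^{(4-2\alpha_0-\alpha)n/2}$), which yields \eqref{e.4.5a} (resp.\ \eqref{e.4.5}).

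\textbf{Main obstacle.} The delicate point is the sharpness of the last-gap exponent. Conservatively treating $j_n$ as large as $2\alpha$ (the generic case for $2\le i\le n-1$) would produce the strictly weaker condition $2\bar\alpha_0+\bar\alpha<3/2-\alpha_0-\alpha$. Recovering the claimed $<(3-2\alpha_0-\alpha)/2$ depends on the structural observation that $|\eta_n|$ arises from only a single binomial factor in the expansion, forcing $j_n\in\{0,\alpha\}$. Preserving this slack through the square-root, Hardy--Littlewood and simplex-integration steps, while still extracting both a $|t-r|^{2\bar\alpha_0}$ and a $|y-x|^{2\bar\alpha}$ factor, is the key technical subtlety.
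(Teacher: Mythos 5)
Your plan follows the paper's proof in all essentials: the same split of the rectangular increment into a part supported on $[0,r]^n$ (the paper's $A_n$) and a part on $\Delta^n_{r,t}=[0,t]^n\setminus[0,r]^n$ (the paper's $B_n$); the same factorization of the $(t,r)$- and $(x,y)$-dependence into the outermost Gaussian and the phase, with the same elementary interpolation bounds; and the same It\^o-isometry/Cauchy--Schwarz/Hardy--Littlewood/simplex chain. The point you flag as the ``main obstacle'' --- that the last variable enters only one binomial factor of the expansion, so $j_n\in\{0,\alpha\}$ and the last-gap integrability condition $\frac{2\bar\alpha_0+\bar\alpha+(1+\alpha)/2}{2-\alpha_0}<1$ is exactly the stated hypothesis --- is precisely what the paper exploits in \eqref{e.phi_bound}. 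The one genuine divergence is in the $\Delta_n^{(2)}$ (i.e.\ $B_n$) term: you extract $|t-r|^{2\bar\alpha_0}$ pointwise via $\1_{(r,t]}(s_{\sigma(n)})\le(t-r)^{2\bar\alpha_0}(t-s_{\sigma(n)})^{-2\bar\alpha_0}$ and then integrate over the full simplex $T_n(t)$, whereas the paper keeps the indicator, decomposes $\Delta^n_{r,t}$ into the pieces $T_{n,\si,k,r,t}$, harvests a power of $(t-r)$ from the integration over $(r,t)$, and only afterwards matches that power with $2\bar\alpha_0$ through the auxiliary critical exponent $\al_0'$. Your route is valid and arguably cleaner --- it produces the exponent $2\bar\alpha_0$ directly and lets you reuse the $A_n$ machinery verbatim --- at the cost of a worsened last-gap singularity, which is integrable under exactly the stated hypothesis. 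One slip to correct: squaring does not double the indicator ($\1_{(r,t]}^2=\1_{(r,t]}$), so $|\widehat{\Delta_n^{(2)}}|^2$ acquires the weight $(t-s_{\sigma(n)})^{-2\bar\alpha_0}$, not $(t-s_{\sigma(n)})^{-4\bar\alpha_0}$; with $-4\bar\alpha_0$ your claim that the last-gap singularity matches that of $\Delta_n^{(1)}$ would be false and the condition would degrade to $4\bar\alpha_0+\bar\alpha<\frac{3-2\al_0-\al}{2}$. Since the integrability requirement you actually state, $(1+\alpha+2\bar\alpha+4\bar\alpha_0)/(2(2-\alpha_0))<1$, is the one corresponding to the correct weight $-2\bar\alpha_0$, the remainder of your computation goes through as written.
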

  \begin{proof}  We follow the notation in the proof of 
  Theorem \ref{thm:exist-uniq-chaos} and divide the proof into several steps.
  
\noindent{\it Step 1}. \   
    For every $s=(s_1,\cdots,s_n)\in[0,t]^n$, the Fourier transform of $f_n(t,x,s_1,x_1,\dots,s_n,x_n)$ with respect to the spatial variables $(x_1,\dots,x_n)$, denoted by $\hat f_n(t,x,s,\xi)$ is 
    \begin{align*}
      \hat  f_n(t,x)&=\hat  f_n(t,x,s_1,\xi_1,\dots,s_n,\xi_n)
    \\&=
    \frac{c^n}{n!}   \int_{\RR ^d}
    \prod_{i=1}^n e^{-\frac{1}{2}(s_{\si(i+1)}-s_{\si(i)})|\xi_{\si(i)}+\cdots +
    \xi_{\si(1)} -\zeta|^2} 
     { e^{-\imath x \cdot(\xi_{\sigma(n)}+ \cdots + \xi_{\sigma(1)}-\zeta)}} \hat u_0(\zeta) e^{-\frac {  s_{\sigma(1)}|\zeta|^2} 2} d\zeta,
    \end{align*}
    where $\sigma$ is a permutation of $\{1,\dots,n\}$ such that $s_{\sigma(1)}\le\cdots\le s_{\sigma(n)}$ and we have set $s_{\si(n+1)}=t$. 
    We   use the following notation
    \[
    \eta_i=\xi_{\sigma(i)}+ \cdots + \xi_{\sigma(1)}
    \quad\textrm{and}\quad
    g_n(s, \eta, \zeta)
    =\prod_{i=1}^{n-1}  e^{-\frac{1}{2}(s_{\si(i+1)}-s_{\si(i)})|\eta_i -\zeta|^2}\,.
    \]
    Thus,  we can rewrite
    \begin{equation*}
    \hat  f_n(t,x)=
    \frac{c^n}{n!}   \int_{\RR ^d}   e^{-\frac{1}{2}(t-s_{\si(n)})|\eta_n -\zeta|^2 -\imath x\cdot (\eta_n-\zeta)}
     g_n(s,\eta, \zeta) \hat u_0(\zeta) e^{-\frac {  s_{\sigma(1)}|\zeta|^2} 2} d\zeta\,.
    \end{equation*}
    We denote $\Delta^n_{r,t}=[0,t]^n\setminus[0,r]^n$ and apply triangle inequality to get 
    \begin{align*}
      \EE|u_n(t,x)-u_n(t,y)-u_n(r,x)+u_n(r,y)|^2
      \le 2A_n+2B_n\,,
    \end{align*}
    where
    \begin{equation*}
      A_n=\EE \lt|I_{n,r} \lt( f_n(t,x,\cdot)-f_n(t,y,\cdot)  -f_n(r,x,\cdot)+f_n(r,y,\cdot) \rt)\rt|^2
    \end{equation*}
    and
    \begin{equation*}
      B_n=\EE \lt|I_{n,t} \lt([f_n(t,x,\cdot)-f_n(t,y,\cdot)]\1_{\Delta^n_{r,t}}\rt)\rt|^2\,.
    \end{equation*}
\noindent{\it Step 2}. \       For every $s,s'\in[0,r]^n$, we denote 
    \begin{align*}
      \phi(s,s')=\frac1{(2 \pi)^{nd}}&\int_{\RR^{nd}}\lt(  \hat f_n(t,x,s,\xi)-\hat f_n(t,y,s,\xi)  -\hat f_n(r,x,s,\xi)+\hat f_n(r,y,s,\xi)\rt)
      \\&\times\overline{\lt( \hat f_n(t,x,s',\xi)-\hat f_n(t,y,s',\xi) -\hat f_n(r,x,s',\xi)+\hat f_n(r,y,s',\xi)\rt)}\mu(\xi_1)d \xi_1\cdots \mu( \xi_n) d \xi_n
    \end{align*}
    and
     \begin{equation}
      \phi_1(s)=\frac1{(2 \pi)^{nd}}\int_{\RR^{nd}}\lt| \hat f_n(t,x,s,\xi)-\hat f_n(t,y,s,\xi)  -\hat f_n(r,x,s,\xi)+\hat f_n(r,y,s,\xi)\rt|^2\mu( \xi_1)d \xi_1\cdots \mu( \xi_n)d \xi_n\,.\label{e.phi_1} 
    \end{equation}
    Applying It\^o isometry of Wiener chaos and Cauchy-Schwarz, we have
    \begin{align*}
      A_n=n!\int_{[0,r]^n}\int_{[0,r]^n} \phi(s,s')\prod_{j=1}^n \gamma_0(s_j-s_j')dsds'\le n!\int_{[0,r]^n}\int_{[0,r]^n}[\phi_1(s)\phi_1(s')]^{\frac12}\prod_{j=1}^n \gamma_0(s_j-s_j')dsds'\,.
    \end{align*}
    Using  the Hardy-Littlewood inequality (see \cite[Inequality (2.4)]{HN} or \cite[Inequality (1.5)]{valkeila}) we see that
    \begin{align}\label{eqn:CnHardy}
      A_n\lesssim c^nn!\lt(\int_{[0,r]^n}|\phi_1(s)|^{\frac1{2- \alpha_0}} ds\rt)^{2- \alpha_0}.
    \end{align}
\noindent{\it Step 3}. \   We estimate bound $A_n$ under hypothesis \ref{h.2.2} (i).  Observe that for every $s\in[0,r]^n$, 
    \begin{align*}
      & \hat f_n(t,x,s ,\xi)-\hat f_n(t,y,s ,\xi) -\hat f_n(r,x,s ,\xi)+\hat f_n(r,y,s ,\xi)
      \\&=\frac{c^n}{n!}\int_{\RR^d}e^{-\frac12(r-s_{\sigma(n)})|\eta_n- \zeta|^2-\imath y \cdot( \eta_n-\zeta)}
     \lt[e^{-\frac12(t-r)|\eta_n- \zeta|^2}-1 \rt]\lt[e^{-\imath(x-y)\cdot(\eta_n- \zeta)}-1 \rt]g_n(s,\eta,\zeta)  \hat u_0(\zeta) e^{-\frac {  s_{\sigma(1)}|\zeta|^2} 2} d\zeta.
    \end{align*}
    Applying Jensen's inequality with respect to the measure $|\hat u_0(\zeta)|e^{-\frac{s_{\sigma(1)}|\zeta|^2}2}d \zeta$, we obtain
    \begin{align*}
      &\lt| \hat f_n(t,x,s ,\xi)-\hat f_n(t,y,s ,\xi) -\hat f_n(r,x,s ,\xi)+\hat f_n(r,y,s ,\xi)\rt|^2
      \\ &\le\frac{c^{ n}}{(n!)^2}\int_{\RR^d}|\hat u_0(\zeta)| e^{-\frac {  s_{\sigma(1)}|\zeta|^2} 2} d\zeta
        \times\int_{\RR^d}e^{-(r-s_{\sigma(n)})|\eta_n- \zeta|^2}
     \\ &\quad \lt|e^{-\frac12(t-r)|\eta_n- \zeta|^2}-1 \rt|^2\lt|e^{-\imath(y-x)\cdot(\eta_n- \zeta)}-1 \rt|^2 |g_n(s,\eta,\zeta)|^2 |\hat u_0(\zeta)| e^{-\frac {  s_{\sigma(1)}|\zeta|^2} 2} d\zeta.
    \end{align*}
    Applying the elementary estimates 
    \begin{equation*}
        |e^{-\frac12(t-r)|\eta_n- \zeta|^2}-1 |\lesssim |t-r|^{\bar \alpha_0}|\eta_n- \zeta|^{2\bar \alpha_0}\,, 
        \quad \quad
        |e^{-\imath(y-x)\cdot(\eta_n- \zeta)}-1|\lesssim |y-x|^{\bar \alpha}|\eta_n- \zeta|^{\bar \alpha}
    \end{equation*} 
 for any $\tilde \al_0, \tilde \al\in [0, 1]$   and noticing \eqref{cond:fu0}, we see that
    \begin{align}
      \phi_1(s)&\lesssim \frac{c^n}{(n!)^2}|t-r|^{2 \bar\alpha_0}|x-y|^{2\bar \alpha} s_{\sigma(1)}^{-\beta} 
      \int_{\RR^d}\int_{\RR^{nd}} e^{-(r-s_{\sigma(n)}) |\eta_n- \zeta|^2}
      \nonumber
      \\ &\quad|\eta_n- \zeta|^{4 \bar \alpha_0+2\bar \alpha} |g_n(s,\eta,\zeta)|^2  \mu( \xi_1)d \xi_1\cdots \mu(\xi_n)d \xi_n |\hat u_0(\zeta)| e^{-\frac {  s_{\sigma(1)}|\zeta|^2} 2} d\zeta\,.\label{eqn:phi1}
    \end{align}
    To estimate the integral above, we first consider the integration with respect to $\xi_{\sigma(n)}$. Noting that $g_n$ does not depend on $\xi_{\sigma(n)}$, we can apply Lemma \ref{lem:supzeta} to get
    \begin{align*}
      \int_{\RR^d}e^{-(r- s_{\sigma(n)})|\eta_n- \zeta|^2}|\eta_n- \zeta|^{4\bar \alpha_0+2\bar \alpha}\mu(\xi_{\sigma(n)})d\xi_{\sigma(n)}
      \lesssim  (r- s_{\sigma(n)})^{-2 \bar \alpha_0-\bar \alpha-\frac{\alpha}2}\,,
    \end{align*}
    where the implied constant is independent of $\zeta$ and $\xi_1,\dots,\xi_{\sigma(n-1)}$. Next, we integrate the variable $\xi_{\sigma(n-1)}$.
    Due to the factor $e^{-(s_{\sigma(n)- s_{\sigma(n-1)}})|\eta_{n-1}- \zeta |^2}$ in $g_n$, this amounts to estimate an analogous integral but with $\bar \alpha_0=\bar \alpha=0$. We continue this way until all the variables $\xi_{\sigma(1)},\dots, \xi_{\sigma(n)}$ are integrated out. The last integral is $\int_{\RR^d}e^{-\frac12 s_{\sigma(1)}|\zeta|^2}|\hat u_0(\zeta)|d \zeta$ which is controlled by \eqref{cond:fu0}. Therefore, we have
    \begin{align*}
      \phi_1(s)\lesssim \frac{c^n}{(n!)^2}|t-r|^{2\bar \alpha_0}|x-y|^{2\bar \alpha}(r-s_{\sigma(n)})^{-2\bar \alpha_0-\bar \alpha-\frac \alpha2}\prod_{j=1}^{n-1}(s_{\sigma(j+1)}- s_{\sigma(j)})^{-\frac \alpha2} s_{\sigma(1)}^{-2 \be}\,.
    \end{align*}
    Together with \eqref{eqn:CnHardy}, we obtain
    \begin{eqnarray} 
      A_n
      &\lesssim& \frac{c^n}{n!}|t-r|^{2\bar \alpha_0}|x-y|^{2\bar \alpha}\lt[\int_{[0,r]^n}(r-s_{\sigma(n)})^{-\frac{2\bar \alpha_0+\bar \alpha+\frac \alpha2}{2- \alpha_0}}\prod_{j=1}^{n-1}(s_{\sigma(j+1)}- \sigma_{\sigma(j)})^{-\frac \alpha{2(2- \alpha_0)} }s_{\sigma(1)}^{- \frac{2\beta}{2- \alpha_0}} ds\rt]^{2- \alpha_0}
      \nonumber\\&\lesssim& {c^n}{(n!)^{1-\al_0}}|t-r|^{2\bar \alpha_0}|x-y|^{2\bar \alpha}\lt[\int_{T_n(r)}(r-s_{n})^{-\frac{2\bar \alpha_0+\bar \alpha+\frac \alpha2}{2- \alpha_0}}\prod_{j=1}^{n-1}(s_{j+1}- s_{j})^{-\frac \alpha{2(2- \alpha_0)} }s_{1}^{- \frac{2\beta}{2- \alpha_0}} ds\rt]^{2- \alpha_0}\,, \label{e.4.7add} 
    \end{eqnarray}  
where $T_n(r)=\{(s_1,\dots,s_n) :s_1<\cdots<s_n<r \}$. Using the condition \eqref{e.al-be-de} and Lemma \ref{lem:intg-simplex}, we see that the integration over $T_n(r)$ above is finite and is bounded by
\[
r^{n\frac{2- \alpha_0-\frac \alpha2}{2- \alpha_0}-\frac{2\bar \alpha_0+\bar \alpha+2 \beta}{2- \alpha_0}}\left[\Gamma\left(n\frac{2- \alpha_0- \frac\alpha2}{2- \alpha_0}+1\right)\right]^{-1}\,.
\]
 Using Stirling formula, we have 
    \begin{eqnarray}  
      A_n
      &\lesssim & c^n|t-r|^{2\bar \alpha_0}|x-y|^{2 \bar \alpha}r^{n(2- \alpha_0- \frac\alpha2)-2\bar \alpha_0- \bar \alpha- 2\beta}\frac{(n!)^{1- \alpha_0}}{\Gamma(n\frac{2- \alpha_0- \frac\alpha2}{2- \alpha_0}+1)^{2- \alpha_0} }
  \nonumber    \\&\lesssim& c^n|t-r|^{2\bar \alpha_0}|x-y|^{2 \bar \alpha}r^{n(2- \alpha_0- \frac\alpha2)-2\bar \alpha_0- \bar \alpha- 2\beta}\frac{(n!)^{1- \alpha_0}}{\Gamma(n(2- \alpha_0- \frac\alpha2)+1)}
   \nonumber   \\&\lesssim& c^n|t-r|^{2\bar \alpha_0}|x-y|^{2 \bar \alpha}r^{n(2- \alpha_0- \frac\alpha2)-2\bar \alpha_0- \bar \alpha- 2\beta}(n!)^{\frac \alpha2-1}\,.\label{e.4.8add}
    \end{eqnarray} 
\noindent{\it Step 4}. \  Now we consider $A_n$ for 
$d=1$ and $\al>0$.    In this case the inequality \eqref{eqn:phi1}
 becomes
 \begin{align}
      \phi_1(s)&\lesssim \frac{c^n}{(n!)^2}|t-r|^{2 \bar\alpha_0}|x-y|^{2\bar \alpha} s_{\sigma(1)}^{-\beta} 
     \int_{\RR }\int_{\RR^{n }} e^{-(r-s_{\sigma(n)})|\eta_n- \zeta|^2}|\eta_n- \zeta|^{4 \bar \alpha_0+2\bar \alpha} \nonumber
      \\ &\quad\prod_{i=1}^{n-1}  e^{- (s_{\si(i+1)}-s_{\si(i)})|\eta_i -\zeta|^2}   \prod_{i=1}^n |\xi_i|^{\al} d\xi_1\cdots d\xi_n |\hat u_0(\zeta)| e^{-\frac {  s_{\sigma(1)}|\zeta|^2} 2} d\zeta\,. 
    \end{align} 
Making substitution $\eta_i-\zeta=\xi_{\si(i)}+\cdots+\xi_{s(\si(1)}-\zeta=z_i$, we have 
\begin{align}
      \phi_1(s)&\lesssim \frac{c^n}{(n!)^2}|t-r|^{2 \bar\alpha_0}|x-y|^{2\bar \alpha} s_{\sigma(1)}^{-\beta} 
     \int_{\RR }\int_{\RR^{n }} e^{-(r-s_{\sigma(n)})|z_n |^2} \prod_{i=1}^{n-1}  e^{- (s_{\si(i+1)}-s_{\si(i)})|z_i  |^2}   \nonumber
      \\ &\quad |z_n |^{4 \bar \alpha_0+2\bar \alpha}  |z_1+\zeta|^\al \prod_{i=2}^n |z_i-z_{i-1}|^{\al} dz_1\cdots dz_n |\hat u_0(\zeta)| e^{-\frac {  s_{\sigma(1)}|\zeta|^2} 2} d\zeta\nonumber\\
&\lesssim \frac{c^n}{(n!)^2}|t-r|^{2 \bar\alpha_0}|x-y|^{2\bar \alpha} s_{\sigma(1)}^{-\beta} 
     \int_{\RR }\int_{\RR^{n }} e^{-(r-s_{\sigma(n)})|z_n |^2} \prod_{i=1}^{n-1}  e^{- (s_{\si(i+1)}-s_{\si(i)})|z_i  |^2}   \nonumber
      \\ &\quad |z_n |^{4 \bar \alpha_0+2\bar \alpha}  (|z_1|^\al+|\zeta|^\al) \prod_{i=2}^{n } (|z_i|^\al +|z_{i-1}|^{\al}) dz_1\cdots dz_n |\hat u_0(\zeta)| e^{-\frac {  s_{\sigma(1)}|\zeta|^2} 2} d\zeta      \,. 
    \end{align} 
As for \eqref{eqn:phi_n}, we expand     $(|z_1|^\al+|\zeta|^\al) \prod_{i=2}^n (|z_i|^\al +|z_{i-1}|^{\al})$ and integrate $dz_1\cdots dz_n   d\zeta$ to obtain 
\begin{equation} 
\phi_{1}(s ) 
\le  \frac{c^n}{(n!)^2}|t-r|^{2 \bar\alpha_0}|x-y|^{2\bar \alpha} s_{\sigma(1)}^{-2\beta}   (r-s_{\si(n)})^{-2\tilde \al_0-\tilde \al}
   \sum_{j  \in D
_{n}} \prod_{i=1}^n
 (s_{\sigma(i+1)}-s_{\sigma(i )}) ^{-\frac{1}{2}(1+j_i)}\,. 
  \label{e.phi_bound}
\end{equation}
The set $D_n$ is similar to the one defined in  Section 3 with a minor difference. More specifically, $D_{n}$ is a set of multi-indices with the following properties: $\text{Card}(D_{n})=2^{n-1}$, for each $j=(j_1,\dots,j_n)\in D_n$, $j_n\in\{0,\alpha\}$, $j_i\in\{0,\alpha,2 \alpha\}$ and  
\begin{equation*}
|j|\equiv \sum_{i=1}^{n} j_i = (n-1)\al.
\end{equation*}
Similar to the estimates \eqref{e.fn_by_psi}-\eqref{e.3.18}  if we assume 
$2\tilde \al_0+\tilde \al<\frac{3-2\al_0-\al}{2}$,  then we have
\begin{eqnarray}
A_n
&\le& c^n (n!)^{1-\al_0} |t-r|^{2\tilde \al_0} |x-y|^{2\tilde \al} \nonumber\\
&&\qquad 
\sum_{j\in D_n} \left[\int_{[0, r]^{nd}} \left( s_{\sigma(1)}^{-2\beta}   (r-s_{\si(n)})^{-2\tilde \al_0-\tilde \al-\frac{1+\al}{2}}
   \sum_{j  \in D
_{n}} \prod_{i=1}^{n-1}
 (s_{\sigma(i+1)}-s_{\sigma(i )}) ^{-\frac{1}{2}(1+j_i)}\right)^{\frac1{2-\al_0}}\right]^{2-\al_0}\nonumber\\
&\le&  c^n (n!)^{\frac{\al-1}{2} } |t-r|^{2\tilde \al_0} |x-y|^{2\tilde \al}
t  ^{\tilde \al_3} t^{\frac{3-2\al_0-\al}{2} n}\,.  \label{e.4.12} 
\end{eqnarray}

\noindent{\it Step 5}. We estimate $B_n$ under hypothesis \ref{h.2.2} (i).  
    As in deriving \eqref{eqn:CnHardy}, we apply It\^o isometry of Wiener chaos, Cauchy-Schwarz inequality and Hardy-Littlewood inequality to obtain a similar inequality to  \eqref{eqn:CnHardy}. Namely, 
    \begin{equation*}
      B_n\le c^n n!\lt(\int_{\Delta^n_{r,t}}|\psi_1(s)|^{\frac1{2- \alpha_0}}ds\rt)^{2- \alpha_0}\,,
    \end{equation*}
    where
    \begin{equation*}
      \psi_1(s)=\frac1{(2 \pi)^{nd}}\int_{\RR^{nd}} \lt|\hat f_n(t,x,s,\xi)-\hat f_n(t,y,s,\xi) \rt|^2 \mu(\xi_1)d \xi_1\cdots \mu(\xi_n)d \xi_n\,.
    \end{equation*}
    Similar to \eqref{e.phi_bound}, we have
\begin{align}
&\psi_1(s)
\nonumber\\&\le \frac{c^n}{(n!)^2} |x-y|^{2\tilde \al} s_{\si(1)}^{-\be} \int_{\RR^{(n+1)d}}
|\eta_n-\zeta|^{2\tilde \al} e^{-(t-s_{\si(n)})|\eta_n-\zeta|^2} |g_n(s,\eta, \zeta) | \mu(\xi_1)d\xi_1\cdots\mu(\xi_n)d \xi_n |\hat u_0(\zeta)| e^{-\frac{s_{\si(1)}|\zeta|^2}{2}} d\zeta\nonumber\\
&\le \frac{c^n}{(n!)^2}|x-y|^{2\tilde \al}  s_{\si(1)}^{-2\be} (t-s_{\si(n)})^{-\frac{\al}{2}-\tilde \al}\prod_{i=1}^{n-1} (s_{\si(i+1)}- s_{\si(i+1)})^{ -\frac{\al}{2}}\,. 
\end{align}
Thus we have 
 \begin{equation*}
      B_n\le \frac{c^n}{n!} |x-y|^{2\tilde \al} \lt(\int_{\Delta^n_{r,t}}
    \left( s_{\si(1)}^{-2\be} (t-s_{\si(n)})^{-\frac{\al}{2}-\tilde \al}\prod_{i=1}^{n-1} (s_{\si(i+1)}- s_{\si(i+1)})^{ -\frac{\al}{2}}    
 \right)^{\frac1{2-\al_0}}     ds\rt)^{2- \alpha_0}\,,
    \end{equation*}
The integrating region $\Delta^n_{r,t}$ can be  decomposed as
$\Delta^n_{r,t}=\cup_{\si}\cup_{0\le k\le n}
T_{n,\si, k, r, t}$,
where $\si$ is a permutation of $\{1 ,2, \cdots, n\}$ and 
\[
T_{n,\si, k, r, t}=\left\{0<s_{\si(1)}<\cdots <s_{\si(k)}<r<
s_{\si(k+1)}<\cdots<s_{\si(n)}<t\right\}\,. 
\]
When $\sigma$ is the identity, we denote $T_{n,  k, r, t}=T_{n,\si, k, r, t}$.
Thus by symmetry 
 \begin{equation*}
      B_n\le (n!)^{1-\al_0} c^n |x-y|^{2\tilde \al} \lt(\sum_{k=0}^n\int_{T_{n,  k, r, t}}
    \left( s_1^{-2\be} (t-s_n )^{-\frac{\al}{2}-\tilde \al}\prod_{i=1}^{n-1} (s_{ i+1 }- s_{ i })^{ -\frac{\al}{2}}    
 \right)^{\frac1{2-\al_0}}     ds\rt)^{2- \alpha_0}\,,
    \end{equation*}
For each $k$, the multiple integral over $T_{n,k,r,t}$
is estimated as follows
\begin{eqnarray*}
&&\int_{T_{n,  k, r, t}}
    s_1^{-\frac{2\be}{2-\al_0} } (t-s_n )^{-\frac{ \al+2\tilde \al}{2-\al_0} }\prod_{i=1}^{n-1} (s_{ i+1 }- s_{ i })^{ -\frac{ \al}{2-\al_0} }   
      ds_1\cdots ds_n\\
&&\qquad\le \int_{0<s_1<\cdots<s_k<r }  
s_1^{-\frac{2\be}{2-\al_0} } (r-s_k)^{ -\frac{ \al}{2-\al_0} } \prod_{i=1}^{k-1} (s_{ i+1 }- s_{ i })^{ -\frac{ \al}{2-\al_0} } ds_1\cdots ds_k     \\
&&\qquad  \int_{ r<s_{k+1}<\cdots<s_n<t}   \prod_{i=k+1}^{n-1} (s_{ i+1 }- s_{ i })^{ -\frac{ \al}{2-\al_0} }          (t-s_n )^{-\frac{ \al+2\tilde \al}{2-\al_0} }s_{k+1}\cdots ds_n \\
&&\qquad\le \frac{c^n r^{k(1-\frac{\al}{2(2-\al_0)}+1-\frac{2\be}{2(2-\al_0)}}}
{\Ga \left(k(1-\frac{\al}{2(2-\al_0)})+1-\frac{2\be}{2(2-\al_0)} +1\right)}
\frac{  (t-r)^{(n-k-1)(1-\frac{\al}{2(2-\al_0)}+ 1-\frac{\al+2\tilde \al}{2(2-\al_0)} }}
{\Ga \left((n-k-1) (1-\frac{\al}{2(2-\al_0)})+1- \frac{\al+2\tilde \al}{2(2-\al_0)}+1\right)  } \\
&&\qquad\le (n!)^{\frac{\al}{2(2-\al_0)}-1} t^{(n-1)(1-\frac{\al}{2(2-\al_0)})   +1-\frac{2\be}{a(2-\al_0)}} (t-r)^{ 1-\frac{\al+2\tilde \al}{2(2-\al_0)}}\,. 
\end{eqnarray*}
Thus we have the following bound 
    \begin{align*}
      B_n
 &\lesssim c^n(n!)^{\frac \alpha2-1}|x-y|^{2\bar \alpha}|t-r|^{\frac{4-2\al_0-\al-2\tilde \al}{2} }t^{\tilde \be} t^{\frac{4 - 2\alpha_0-  \alpha}{2}n}   \,.
    \end{align*}
Denote 
\[
\al_0'=\frac{4-2\al_0-\al-2\tilde \al}{4}\,.
\]
It is easy to verify that $2\al_0'+\tilde \al=2-\al_0-\frac{\al}{2}$  and
\begin{equation}
      B_n
 \lesssim c^n(n!)^{\frac \alpha2-1}|x-y|^{2\bar \alpha}|t-r|^{2\tilde \al_0  }t^{\tilde \be} t^{\frac{4 - 2\alpha_0-  \alpha}{2}n}   \,.
    \end{equation}

\noindent{\it Step 6}. \
  We estimate  $B_n$ under hypothesis \ref{h.2.2} (i).  
  First we have 
    \begin{equation*}
      B_n\le c^nn!\lt(\int_{\Delta^n_{r,t}}|\psi_1(s)|^{\frac1{2- \alpha_0}}ds\rt)^{2- \alpha_0}\,,
    \end{equation*}
    where  
\begin{eqnarray*}
\psi_1(s)
&\le& \frac{c^n}{(n!)^2} |x-y|^{2\tilde \al} s_{\si(1)}^{-\be} \int_{\RR^{(n+1}d}
|\eta_n-\zeta|^{2\tilde \al} e^{-(t-s_{\si(n)})|\eta_n-\zeta|^2} |g_n(s,\eta, \zeta)| \mu(d\xi_1)\cdots\mu(d\xi_n) |\hat u_0(\zeta)| e^{-\frac{s_{\si(1)}|\zeta|^2}{2}} d\zeta\\
&\le& \frac{c^n}{(n!)^2}|x-y|^{2\tilde \al}  s_{\si(1)}^{-2\be} 
\int_{\RR^{(n+1}d}
|\eta_n-\zeta|^{2\tilde \al} e^{-(t-s_{\si(n)})|\eta_n-\zeta|^2}
\prod_{i=1}^{n-1} e^{-(s_{\si(i+1)}-s_{\si(i )})|\eta_i-\zeta|^2}  
\\
& & \qquad \qquad \qquad \prod_{i=1}^n |\xi_i|^\al d\xi_1 \cdots d\xi_n  |\hat u_0(\zeta) e^{-\frac{s_{\si(1)}|\zeta|^2}{2}} d\zeta \\
&\le& \frac{c^n}{(n!)^2} |x-y|^{2\tilde \al}  s_{\si(1)}^{-2\be} 
\int_{\RR^{(n+1}d}
|\eta_n-\zeta|^{2\tilde \al} e^{-(t-s_{\si(n)})|\eta_n-\zeta|^2}
\prod_{i=1}^{n-1} e^{-(s_{\si(i+1)}-s_{\si(i )})|\eta_i-\zeta|^2}  
\\
& & \qquad \qquad \qquad |\eta_1+\zeta|^\al \prod_{2=1}^n |\eta_i-\eta_{i-1} |^\al
| d\eta_1 \cdots d\eta_n  |\hat u_0(\zeta) e^{-\frac{s_{\si(1)}|\zeta|^2}{2}} d\zeta  \\
&\le& \frac{c^n}{(n!)^2}|x-y|^{2\tilde \al}  s_{\si(1)}^{-2\be} 
\int_{\RR^{(n+1}d} (
|\eta_n|^{2\tilde \al}+| \zeta|^{2\tilde \al}) e^{-(t-s_{\si(n)})|\eta_n-\zeta|^2}
\prod_{i=1}^{n-1} e^{-(s_{\si(i+1)}-s_{\si(i )})|\eta_i-\zeta|^2}  
\\
& & \qquad \qquad \qquad (|\eta_1|^{\al}+|\zeta|^\al) \prod_{2=1}^n (|\eta_i|^\al+| \eta_{i-1} |^\al)
| d\eta_1 \cdots d\eta_n  |\hat u_0(\zeta) e^{-\frac{s_{\si(1)}|\zeta|^2}{2}} d\zeta     \,. 
\end{eqnarray*}
In the same  way as for \eqref{e.phi_bound} and \eqref{e.4.12}, we have 
    \begin{align*}
      B_n
 &\lesssim c^n (n!)^{\frac {\alpha -1}2}|x-y|^{2\bar \alpha}|t-r|^{\frac{3-2\al_0-2\tilde \al-\al}{2}}
 t^{\tilde \be} t^{ \frac{3-2\al_0-\al}{2}n}  \,.
    \end{align*}
Denote $\tilde \al_0=\frac{3-2\al_0-2\tilde \al-\al}{4}$,  then we see easily that $2\tilde \al_0+\tilde \al<\frac{3-2\al_0-\al}{2}$ and
\begin{equation} 
      B_n
 \lesssim c^n (n!)^{\frac {\alpha -1}2}|x-y|^{2\bar \alpha}|t-r|^{2\tilde \al_0}
 t^{\tilde \be} t^{ \frac{3-2\al_0-\al}{2} n}  \,.
    \end{equation}
    This proves the proposition. 
  \end{proof}
   \begin{theorem}\label{thm:ujointhder} Assuming that  hypothesis \ref{h.2.1} holds   and $u_0$ satisfies \eqref{cond:fu0}. 
  \begin{enumerate}
  \item[(i)] Let  the hypothesis \ref{h.2.2} (i) be  satisfied  and 
  let $\bar  \alpha_0$ and $\bar\alpha$ be in $[0,1]$ such that
  \begin{equation*}
  2\bar \alpha_0 +\bar \alpha <2- \alpha_0- \frac\alpha2\,.  
  \end{equation*}
  [It is easy to see the right hand side is positive under our assumption.] \ 
  Then  there is a positive constant $c $ such that for every   $t\ge r\ge 0$ and $x,y\in\RR^d$
  \begin{equation}
  \EE \left|u(t,y)-u(r,y)-u(t,x)+u(r,x)\right|^p
  \le  C\exp\left(c  p^{\frac{4- \alpha}{2-\alpha} }t^{\frac{4-2\al_0- \alpha}{2-\alpha}}  \right)
  |t-r|^{ p\bar\al_0}  |x-y|^{ p\bar \alpha}   
       \,.
  \label{e.4.5aa}
  \end{equation}
  \item[(ii)] Let  the hypothesis \ref{h.2.2} (ii) be  satisfied  and 
  let $\bar  \alpha_0$ and $\bar\alpha$ be in $[0,1]$ such that
  \begin{equation*}
  2\bar \alpha_0 +\bar \alpha <\frac{3-2\al_0-\al}{2}\,.  
  \end{equation*} 
  Then  there is a positive constant $c $ such that for every   $t\ge r\ge 0$ and $x,y\in\RR^d$
    \begin{equation}
  \EE \left|u(t,y)-u(r,y)-u(t,x)+u(r,x)\right|^p
  \le  C\exp\left(c  p^{\frac{3- \alpha}{1-\alpha} }t^{\frac{3-2\al_0- \alpha}{1-\alpha} }  \right)
  |t-r|^{ p\bar\al_0}  |x-y|^{ p\bar \alpha}   
       \,.
  \label{e.4.5aaa}
  \end{equation} 
  \end{enumerate} 
  \end{theorem}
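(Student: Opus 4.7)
My plan is to deduce the $L^p$ bound from the $L^2$ bound on each chaos component established in Proposition \ref{prop:unhder}, by combining Minkowski's inequality, hypercontractivity, and the Mittag-Leffler-type estimate \eqref{e.mittag_leffler}. This parallels the passage from \eqref{e.3.18} to \eqref{e.moment_bounds} in the proof of Theorem \ref{thm:exist-uniq-chaos}, but with the extra small-factor $|t-r|^{2\bar\alpha_0}|x-y|^{2\bar\alpha}$ coming out of every chaos term.

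Concretely, I would first write the chaos decomposition $u(t,y)-u(r,y)-u(t,x)+u(r,x)=\sum_{n\ge0}\Delta_n$, where $\Delta_n=u_n(t,y)-u_n(r,y)-u_n(t,x)+u_n(r,x)$ lies in the $n$-th Wiener chaos, and apply Minkowski's inequality to get $\|u(t,y)-u(r,y)-u(t,x)+u(r,x)\|_p\le\sum_{n\ge0}\|\Delta_n\|_p$. Since $\Delta_n$ lives in a fixed chaos, hypercontractivity gives $\|\Delta_n\|_p\le (p-1)^{n/2}\|\Delta_n\|_2$. Inserting the bound of Proposition \ref{prop:unhder}(i), this yields
\begin{equation*}
\|\Delta_n\|_p\le c^n p^{n/2}|t-r|^{\bar\alpha_0}|x-y|^{\bar\alpha}(n!)^{\frac{\alpha-2}{4}}t^{\frac{4-2\alpha_0-\alpha}{4}n}\,,
\end{equation*}
and analogously, using Proposition \ref{prop:unhder}(ii), under hypothesis \ref{h.2.2}(ii) one replaces the exponent of $(n!)$ by $(\alpha-1)/4$ and the exponent of $t$ by $(3-2\alpha_0-\alpha)n/4$.

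Next I would sum the geometric-type series in $n$. In case (i) the series is of the form $\sum_n |z|^n/(n!)^{(2-\alpha)/4}$ with $z=c\sqrt p\,t^{(4-2\alpha_0-\alpha)/4}$; applying \eqref{e.mittag_leffler} with exponent $(2-\alpha)/4$ gives a bound $C\exp\bigl\{c\,p^{2/(2-\alpha)}t^{(4-2\alpha_0-\alpha)/(2-\alpha)}\bigr\}$ for $\|\cdot\|_p$. Raising to the $p$-th power and using $1+2/(2-\alpha)=(4-\alpha)/(2-\alpha)$ produces the exponential factor in \eqref{e.4.5aa}. In case (ii) the exponent of $(n!)$ in the denominator is $(1-\alpha)/4$; the same Mittag-Leffler estimate with this exponent, followed by raising to the $p$-th power and using $1+2/(1-\alpha)=(3-\alpha)/(1-\alpha)$, produces \eqref{e.4.5aaa}.

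No step is genuinely hard: the heavy lifting has already been done in Proposition \ref{prop:unhder}, and what remains is bookkeeping of exponents. The only place where one must be careful is matching the exponents in the Mittag-Leffler estimate: in case (i) the bound from Proposition \ref{prop:unhder} contributes $(n!)^{(\alpha-2)/4}$ after taking a square root, so the effective $\alpha$ in \eqref{e.mittag_leffler} is $(2-\alpha)/4$, and one must check that this is positive (which follows from $\alpha<2$), and similarly $(1-\alpha)/4>0$ is needed in case (ii) (guaranteed by $\alpha+\alpha_0<3/2$ and $\alpha_0\ge 0$, in particular $\alpha<1$). Once these inequalities are in place, the arithmetic identities $1+2/(2-\alpha)=(4-\alpha)/(2-\alpha)$ and $1+2/(1-\alpha)=(3-\alpha)/(1-\alpha)$ deliver exactly the exponents stated in \eqref{e.4.5aa} and \eqref{e.4.5aaa}.
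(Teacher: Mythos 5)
Your proposal is correct and is precisely the argument the paper intends but omits: Theorem \ref{thm:ujointhder} is stated immediately after Proposition \ref{prop:unhder} with no written proof, and your chain (chaos decomposition, Minkowski, hypercontractivity $\|\Delta_n\|_p\le(p-1)^{n/2}\|\Delta_n\|_2$, insertion of \eqref{e.4.5}/\eqref{e.4.5a}, then summation via \eqref{e.mittag_leffler}) is exactly the passage used at the end of the proof of Theorem \ref{thm:exist-uniq-chaos} to go from \eqref{e.3.18} to \eqref{e.moment_bounds}, with all exponents checking out. One small caveat: your parenthetical claim that $\alpha<1$ follows from $\alpha+\alpha_0<3/2$ and $\alpha_0\ge0$ is a non sequitur (those only give $\alpha<3/2$); the condition $\alpha<1$ is genuinely needed for the Mittag-Leffler step in case (ii), but it is an implicit standing assumption of the paper (already required for \eqref{e.moment_bounds} itself to make sense) rather than a consequence of Hypothesis \ref{h.2.2}(ii) as stated.
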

  
  The above theorem is about the joint H\"older continuity.
  For the usual H\"older continuity the argument is similar and simpler.
  We state it as follows. 
   \begin{theorem}\label{thm:uhder} Assuming that the hypothesis \ref{h.2.1}   and $u_0$ satisfies \eqref{cond:fu0}. 
  \begin{enumerate}
  \item[(i)] Let  the hypothesis \ref{h.2.2} (i) be  satisfied  and 
  let $\bar  \alpha_0$ and $\bar\alpha$ be in $[0,1]$ such that
  \begin{equation*}
  \max( 2\bar \alpha_0, \bar \alpha) <2- \alpha_0- \frac\alpha2\,.  
  \end{equation*}
  Then  there is a positive constant $c $ such that for every   $t\ge r\ge 0$ and $x,y\in\RR^d$
  \begin{equation}
  \EE \left|u(t,x)-u(r,x)|+|u(t,x)-u(t,y)|\right|^p
  \le  C\exp\left(c  p^{\frac{4- \alpha}{2-\alpha} }t^{\frac{4-2\al_0- \alpha}{2-\alpha}}  \right)
 \left[ |t-r|^{ p\bar\al_0} + |x-y|^{ p\bar \alpha}   \right]
       \,.
  \label{e.4.5aa}
  \end{equation}
  \item[(ii)] Let  the hypothesis \ref{h.2.2} (ii) be  satisfied  and 
  let $\bar  \alpha_0$ and $\bar\alpha$ be in $[0,1]$ such that
  \begin{equation*}
  \max(2\bar \alpha_0, \bar \alpha) <\frac{3-2\al_0-\al}{2}\,.  
  \end{equation*} 
  Then  there is a positive constant $c $ such that for every   $t\ge r\ge 0$ and $x,y\in\RR^d$
    \begin{equation}
\EE \left|u(t,x)-u(r,x)|+|u(t,x)-u(t,y)|\right|^p
  \le  C\exp\left(c  p^{\frac{3- \alpha}{1-\alpha} }t^{\frac{3-2\al_0- \alpha}{1-\alpha} }  \right)
  |t-r|^{ p\bar\al_0}  |x-y|^{ p\bar \alpha}   
       \,.
  \label{e.4.5aaa}
  \end{equation} 
  \end{enumerate} 
  \end{theorem}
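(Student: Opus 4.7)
The plan is to follow the strategy of Proposition \ref{prop:unhder} and Theorem \ref{thm:ujointhder}, but applied to the two pure increments $u_n(t,x)-u_n(r,x)$ and $u_n(t,x)-u_n(t,y)$ separately. Since $(a+b)^p \le 2^{p-1}(a^p+b^p)$, it suffices to bound each of $\EE|u(t,x)-u(r,x)|^p$ and $\EE|u(t,x)-u(t,y)|^p$ in the claimed form, and then add the two bounds.

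At the chaos level, I would split the time increment as in Step 1 of the proof of Proposition \ref{prop:unhder},
\begin{equation*}
u_n(t,x) - u_n(r,x) = I_{n,r}\bigl(f_n(t,x,\cdot)-f_n(r,x,\cdot)\bigr) + I_{n,t}\bigl(f_n(t,x,\cdot)\1_{\Delta^n_{r,t}}\bigr),
\end{equation*}
and then estimate the two pieces exactly as the quantities $A_n$ and $B_n$ in Steps 2--6 of Proposition \ref{prop:unhder}, specialized to $y=x$. Setting $y=x$ removes the factor $|e^{-\imath(y-x)\cdot(\eta_n-\zeta)}-1|^2 \lesssim |y-x|^{2\bar\al}|\eta_n-\zeta|^{2\bar\al}$, so Lemma \ref{lem:supzeta} is invoked with exponent $4\bar\al_0$ in place of $4\bar\al_0+2\bar\al$, and the simplex-integrability requirement reduces to $2\bar\al_0 < 2-\al_0-\al/2$ under hypothesis \ref{h.2.2}(i), respectively $2\bar\al_0 < (3-2\al_0-\al)/2$ under \ref{h.2.2}(ii). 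For the space increment $u_n(t,x)-u_n(t,y) = I_{n,t}(f_n(t,x,\cdot)-f_n(t,y,\cdot))$ the factor $|e^{-\frac12(t-r)|\eta_n-\zeta|^2}-1|^2$ is absent, only $|x-y|^{2\bar\al}|\eta_n-\zeta|^{2\bar\al}$ remains, and the constraint becomes $\bar\al < 2-\al_0-\al/2$ or $\bar\al < (3-2\al_0-\al)/2$ respectively. Together these are precisely the assumptions $\max(2\bar\al_0,\bar\al)<\cdots$ of the theorem, and they yield $L^2$ estimates on each chaos of the same form as in Proposition \ref{prop:unhder}, but with $|t-r|^{2\bar\al_0}+|x-y|^{2\bar\al}$ replacing $|t-r|^{2\bar\al_0}|x-y|^{2\bar\al}$.

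Finally, I would upgrade $L^2$ to $L^p$ by the hypercontractive bound $\|I_n(f_n)\|_p \le (p-1)^{n/2}\|I_n(f_n)\|_2$ and sum in $n$ via \eqref{e.mittag_leffler}. Under hypothesis (i) the chaos norms decay like $c^n (n!)^{-(2-\al)/4} t^{(4-2\al_0-\al)n/4}$, so the series $\sum_n \|u_n\|_p$ is controlled by $C\exp\bigl(c p^{2/(2-\al)} t^{(4-2\al_0-\al)/(2-\al)}\bigr)$ times $|t-r|^{\bar\al_0}+|x-y|^{\bar\al}$; raising to the $p$-th power converts $p^{2/(2-\al)}$ into $p^{1+2/(2-\al)} = p^{(4-\al)/(2-\al)}$, matching the exponent appearing in case (i) of the theorem. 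Case (ii) is identical, with Mittag--Leffler exponent $(1-\al)/4$, producing $p^{(3-\al)/(1-\al)}$. The main obstacle is purely bookkeeping: carefully propagating the power counts through the simplex integrals and verifying that the integrability constraints line up with the theorem's hypotheses. Since the harder mixed-rectangle estimate is already handled in Proposition \ref{prop:unhder}, this step is a simplification rather than a genuinely new calculation.
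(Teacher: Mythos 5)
Your proposal is correct and is exactly the ``similar and simpler'' argument the paper has in mind: the paper gives no separate proof of this theorem, deferring to the computations of Proposition \ref{prop:unhder} specialized to the pure time and pure space increments, followed by hypercontractivity and the Mittag--Leffler summation \eqref{e.mittag_leffler} as in Theorem \ref{thm:exist-uniq-chaos}. Your bookkeeping of the exponents (including $1+2/(2-\al)=(4-\al)/(2-\al)$ and $1+2/(1-\al)=(3-\al)/(1-\al)$) matches the paper, and your sum $|t-r|^{p\bar\al_0}+|x-y|^{p\bar\al}$ is the natural conclusion; the product appearing in the paper's display \eqref{e.4.5aaa} of part (ii) is evidently a typo inherited from the joint-continuity statement.
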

  
  This theorem combined with the Theorem 2.3 of \cite{hule} gives
    \begin{corollary}\label{cor:uhder} Assuming that the hypothesis \ref{h.2.1}   and $u_0$ satisfies \eqref{cond:fu0}. 
  Let $\bar  \alpha_0$ and $\bar\alpha$ be in $[0,1]$ such that
  \begin{equation}
  \max( 2\bar \alpha_0, \bar \alpha) <
  \begin{cases}
  2- \alpha_0- \frac\alpha2 &\qquad \hbox{if hypothesis \ref{h.2.2} (i) be  satisfied}\\
  \frac{3-2\al_0-\al}{2} &\qquad \hbox{if hypothesis \ref{h.2.2} (ii) be  satisfied}\\
  \end{cases}  
  \end{equation}
\begin{enumerate}
\item[(i)]   For any $M>0$  there is a random constant $C_M$ such that   
   every $t,r\in[0,M]$ and for every $x,y\in\RR^d$ satisfying $|x|,|y|\le M$,
  \begin{equation}
  |u (t,y)-u (r,y)-u (t,x)+u (r,x)|\le C |t-r|^{ \bar\al_0} |x-y|^{ \bar\al}\,.
  \end{equation}
\item[(ii)]   For any $M>0$  there is a random constant $C_M$ such that   
   every $t,r\in[0,M]$ and for every $x,y\in\RR^d$ satisfying $|x|,|y|\le M$,
  \begin{equation}
  | u (t,x)-u (r,x)|+ 
  |u (t,y) -u (t,x) |\le V\left[ |t-s|^{\bar \al_0} +|x-y|^{\bar \alpha}\right] \,.
  \label{e.2.8}
  \end{equation} 
 \end{enumerate} 
  \end{corollary}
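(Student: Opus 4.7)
The strategy is to combine the $L^p$ moment estimates established in Theorem~\ref{thm:ujointhder} (for part (i)) and Theorem~\ref{thm:uhder} (for part (ii)) with the Kolmogorov-type pathwise continuity criterion of Hu--L\^e (Theorem~2.3 of \cite{hule}) that the statement itself cites. The key observation is that on the compact parameter box $\{(t,x):t\in[0,M],\ |x|\le M\}$, the factor $\exp(cp^{\kappa}t^{\lambda})$ appearing in the moment bounds is majorized by a deterministic constant $K_M(p)$ depending only on $M$ and $p$. Consequently, those bounds reduce to standard Kolmogorov-type estimates, e.g.
\[
\EE|u(t,y)-u(r,y)-u(t,x)+u(r,x)|^p \le K_M(p)\,|t-r|^{p\bar\alpha_0}|x-y|^{p\bar\alpha}
\]
for the rectangular increment, and analogously for the one-parameter increments in part (ii).

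For part (i), I will exploit the strict inequality in the hypothesis: given $\bar\alpha_0,\bar\alpha$ satisfying $2\bar\alpha_0+\bar\alpha<2-\alpha_0-\alpha/2$ (resp.\ $<(3-2\alpha_0-\alpha)/2$), I pick $\bar\alpha_0',\bar\alpha'$ slightly larger that still satisfy the same strict inequality. Applying Theorem~\ref{thm:ujointhder} with these larger exponents, and choosing $p$ large enough so that $p\bar\alpha_0'>1$ and $p\bar\alpha'>d$, one meets the hypotheses of the rectangular (joint-increment) version of Theorem~2.3 of \cite{hule}. The cited theorem then delivers a random constant $C_M$ and the pointwise H\"older-type bound claimed in (i). Part (ii) is handled by the same recipe applied separately to the temporal and spatial increments, using the one-parameter moment bounds supplied by Theorem~\ref{thm:uhder} and the one-parameter Kolmogorov criterion.

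The main technical content has already been absorbed into Proposition~\ref{prop:unhder}, Theorem~\ref{thm:ujointhder}, Theorem~\ref{thm:uhder}, and the abstract Hu--L\^e criterion, so what remains here is essentially a bookkeeping step. The only point meriting some care is to check that the strict inequalities in the hypotheses leave enough room to simultaneously accommodate (a) a small increase of the H\"older exponents to $\bar\alpha_0',\bar\alpha'$, and (b) a choice of $p$ large enough to clear the critical Kolmogorov thresholds; since the hypotheses on $\bar\alpha_0$ and $\bar\alpha$ are open conditions, both can be arranged simultaneously. Modulo this verification, the corollary follows directly from the two preceding theorems and the external continuity criterion.
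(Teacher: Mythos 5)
Your proposal is correct and matches the paper's own (very brief) argument: the paper derives the corollary in one line by combining the moment bounds of Theorems \ref{thm:ujointhder} and \ref{thm:uhder} with Theorem 2.3 of \cite{hule}, exactly as you do, and your added bookkeeping (absorbing the $\exp(cp^{\kappa}t^{\lambda})$ factor into a constant on the compact box and using the openness of the exponent conditions to take $p$ large) is the intended, routine filling-in of that step.
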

  \begin{remark}\label{r.5.5} 
   When $d=1$ and $W$ is  a space-time white noise, this corresponds to the case $\alpha_0=\alpha=1$ ans we can use the result for the case when the hypothesis \ref{h.2.2} (i) is satisfied.  The above corollary  says
  if $2\bar\al_0+\bar \alpha <1/2$,  then
  \begin{equation}
  |u(t,y)-u(r,y)-u(t,x)+u(r,x)|
  \le  C |t-r|^{\bar \al_0} |x-y|^{\bar \alpha}\,.
  \end{equation}
  This coincides with the optimal H\"older exponent result in \cite{hule}.
  On the other hand,  the corollary also implies  in this case  that  if $\bar\al_0<1/4$ and $\bar \alpha<1/2$,  then  
  \begin{equation}
  |u(t,x)-u(r,x)|+|u(t,y)-u(t,x)|
  \le  C \left[|t-r|^{\bar \al_0}+ |x-y|^{ \bar \alpha}\right]\,. 
  \end{equation} 
  From here we   see that $u(t,x)$ is H\"older continuous
  in $t$ with exponent $\bar\al_0<1/4$ and is H\"older continuous
  in $x$ with exponent $\bar\al<1/2$.  This is the optimal H\"older modulus of continuity for the solution of \eqref{spde} with one dimensional space-time white noise. 
  \end{remark}


\begin{thebibliography}{999}
 

\bibitem{BSS} 
Balan, R.; Quer-Sardanyons, L. and Song, J. 
H\"older continuity for the Parabolic Anderson Model with space-time homogeneous Gaussian noise. {\it arXiv preprint}. 
  
\bibitem{chendalang}  Chen, L. and  Dalang, R. C. H\"older-continuity for the nonlinear stochastic heat equation with rough initial conditions. Stoch. Partial Differ. Equ. Anal. Comput. 2 (2014), no. 3, 316-352.
%
%

\bibitem{CHKN} Chen, L.; Kalbasi, K.;  Hu, Y.   and Nualart, D. 
Intermittency for the stochastic heat equation driven by time-fractional Gaussian noise with 
$H \in  (0, 1/2)$.  To appear in Prob. Theory and Related Fields.
 

\bibitem{hubook}  Hu, Y.  Analysis on Gaussian space.
World Scientific, Singapore, 2017.    


\bibitem{hule} Hu, Y. and  Le, .  A multiparameter Garsia-Rodemich-Rumsey inequality and some applications. Stochastic Process. Appl. 123 (2013), no. 9, 3359-3377.


\bibitem{HN} Hu, Y., Nualart, D. Stochastic heat equation driven by fractional noise and local time. {\it Probab. Theory Related Fields } \textbf{143} (2009), no. 1-2, 285-328.

\bibitem{HHLNT} Hu, Y., Huang, J., L\^e, K., Nualart, D. and Tindel, S. Stochastic heat equation with rough dependence in space. {\it Ann. Probab.} \textbf{45} (2017), no. 6B, 4561-4616.

\bibitem{HHLNT1} Hu, Y., Huang, J., L\^e, K., Nualart, D. and Tindel, S.
Parabolic Anderson model with rough dependence in space. To appear. 
%

\bibitem{HHNT}Hu, Y., Huang, J., Nualart, D., Tindel, S.
Stochastic heat equations with general  multiplicative Gaussian noises: H\"older continuity and intermittency. {\it Electron. J. Probab.} 20 (2015), no. 55, 50 pp.

\bibitem{hunualartsong}  Hu, Y.; Nualart, D. and  Song, J.  A nonlinear stochastic heat equation: H\"older continuity and smoothness of the density of the solution. Stochastic Process. Appl. 123 (2013), no. 3, 1083-1103. 


\bibitem{HNS} Hu, Y., Nualart, D., Song, J. Feynman-Kac formula for heat equation driven by fractional white noise. {\it Ann. Probab.} {\bf 30}, 291-326.

\bibitem{valkeila}
    Memin, J.;  Mishura Y. and  Valkeila, E.  Inequalities for the
moments of Wiener integrals with respect to a fractional Brownian motion.
\textit{Statist. Probab. Lett.} \textbf{51} (2001) 197--206.

%
%

%
%


 \bibitem{Nua}Nualart, D.  The Malliavin calculus and related topics.
Second edition. Probability and its Applications (New York). {\it Springer-Verlag, Berlin}, 2006. xiv+382 pp.

%

\bibitem{sanzsarra}  Sanz-Sol\'e, M. and  Sarr\`a, M.  H\"older continuity for the stochastic heat equation with spatially correlated noise. Seminar on Stochastic Analysis, Random Fields and Applications, III (Ascona, 1999), 
259-268, Progr. Probab., 52, Birkh\"auser, Basel, 2002. 

\end{thebibliography}
\end{document}